\titleformat{\section}{\Large\bfseries}{\thesection.}{4pt}{}
\titleformat{\subsection}{\large\bfseries}{\thesection.\arabic{subsection}.}{4pt}{}
\titleformat{\subsubsection}{\bfseries}{\thesection.\arabic{subsection}.\arabic{subsubsection}.}{4pt}{}
\titleformat*{\paragraph}{\bfseries}
\titleformat*{\subparagraph}{\bfseries}
\newtheorem{theorem}{Theorem}[section]
\newtheorem{lemma}[theorem]{Lemma}
\newtheorem{proposition}[theorem]{Proposition}
\theoremstyle{definition}
\newtheorem{definition}[theorem]{Definition}
\newtheorem{remark}[theorem]{Remark}
\newcommand{\RN}{\mathbb{R}^N}
\newcommand{\R}{\mathbb{R}}
\newcommand{\N}{\mathbb{N}}
\numberwithin{equation}{section}
\title[Blowup profile for the gradient]{Sharp equivalent for the blowup  profile  to the gradient of a solution to  the semilinear  heat equation }
\author[G. K. Duong,   T. E. Ghoul and H. Zaag ]{}
\subjclass{Primary: 35K50, 35B40; Secondary: 35K55, 35K57.}
 \keywords{Blowup solution, Blowup profile, Stability, Semilinear heat equation, non variation heat equation.\\
 \textbf{Acknowledgement:} The author Giao Ky Duong is  supported by The International Center for Research and Postgraduate Training in Mathematics-Institute of Mathematics Vietnam Academy of Science and
Technology under the Grant ICRTM04 2021.05, and partially by  University of Economics Ho Chi Minh City.  \\
\textbf{Data availability}: This paper has no associated data.}
\thanks{\today}
\begin{document}
\maketitle

\centerline{Giao Ky Duong$^{(1)}$,   Tej Eddine Ghoul$^{(2)}$   and Hatem Zaag$^{(3)}$} 
\medskip
{\footnotesize
\centerline{$^{(1)}$ Institute of Applied Mathematcis, University of Economics  Ho Chi Minh City, Vietnam.}
  \centerline{ $^{(2)}$ NYUAD Research Institute, New York University Abu Dhabi, PO Box 129188, Abu Dhabi, UAE}
   \centerline{ $^{(3)}$ Universit\'e Sorbonne Paris Nord,
LAGA, CNRS (UMR 7539), F-93430, Villetaneuse, France.}
}

\bigskip
\begin{center}\thanks{\today}\end{center}

\begin{abstract} 
 In this paper,   we  consider    the   standard semilinear  heat equation
\begin{eqnarray*}
\partial_t u = \Delta u  + |u|^{p-1}u, \quad p >1. 
\end{eqnarray*} 
\noindent
 The determination of the (believed to be) generic blowup profile is well-established in the literature, with the solution blowing up only at one point. Though the blow-up of the gradient of the solution is a direct consequence of the single-point blow-up property and the mean value theorem, there is no determination of the final blowup profile for the gradient in the literature, up to our knowledge.
In this  paper, we refine the construction technique of Bricmont-Kupiainen \cite{BKnon94} and Merle-Zaag \cite{MZdm97}, and derive the following profile for the gradient:
$$  \nabla u(x,T)   \sim     -  \frac{\sqrt{2b}}{p-1} \frac{x}{|x| \sqrt{ |\ln|x||}}    \left[\frac{b|x|^2}{2|\ln|x||} \right]^{-\frac{p+1}{2(p-1)}}  \text{ as   } x \to 0,    $$
where $ b =\frac{(p-1)^2}{4p}$, which is  \textbf{\textit{as expected}} the gradient of the well-known blowup profile of the solution.
\end{abstract}

\maketitle
\section{Introduction}


In this paper,  we consider    the following semilinear heat equation
\begin{equation}\label{equa-u-vortex}
\left\{\begin{array}{rcl}
\partial_t  u  & = & \Delta u + |u|^{p- 1}u\\[0.2cm]
u(0) & =& u_0 \in L^\infty(\R^N)
\end{array}
\right.,
\end{equation}
where $u : (x,t) \in \mathbb{R}^N \times [0,T)  \to  \R$ and $p>1$.  The local in time Cauchy problem can be solved in $L^\infty$, thanks to a fixed point technique (see  
Quittner and  Souplet \cite{QPbook07} for instance).  Roughly speaking,   for each initial data  $u_0 \in L^\infty$,  one of the following cases holds:
\begin{itemize}
    \item Either  the  solution  is global.
    \item Or it blows up in finite time $T$  i.e 
    $$ \limsup_{t \to T} \|u(t)\|_{L^\infty} \to +\infty.$$
\end{itemize}
In this case, $T$ is called the blowup time, and if for some 
$a \in \R^N$, there exists $(a_n,t_n) \to (a,T)$  as $n \to +\infty$  such that
$$  |u(a_n,t_n)| \to +\infty \text{ as } n \to +\infty,  $$
then $a$ is called a blowup point.
\bigskip

The behavior of solutions at blowup has generated a huge literature.
The book by Quittner and  Souplet \cite{QPbook07} is a good resource on the subject. According to Herrero and Vel\'azquez, the generic blowup behavior corresponds to the situation where the solution blows up only at one blowup point (say, the origin, from invariance by translation in space of equation \eqref{equa-u-vortex})
with the final blow-up profile 
\begin{equation}\label{defprofile}
u(x,T) \equiv \lim_{t\to T}u(x,t) 
\end{equation}
satisfying 
\begin{equation}\label{equiv}
u(x,T) \sim u^*(x) \equiv \left[ \frac{(p-1)^2}{8p}  \frac{|x|^2}{|\ln|x||} \right]^{-\frac{1}{p-1}}
\mbox{ as }x\to 0.
\end{equation}
Note that the genericity of this profile was published only in one space dimension in Herrero and Vel\'azquez \cite{HVasnsp92} (see also the note \cite{HVcras94}). The higher dimensional case was also proved by the same authors, but never published.
%
%
Several papers proved the existence of solutions obeying this generic behavior (see Galaktionov et al \cite{VGHZVMMF91} for a formal approach, Berger and Kohn \cite{BKcpam88} for a formal and a numerical evidence, Herrero and Vel\'azaquez \cite{HVaihn93} for a rigorous proof, and also Bricmont and Kupiainen \cite{BKnon94} together with Merle and Zaag \cite{MZdm97}).

\medskip

Since the origin is the unique blowup point in the considered behavior, we easily see from the mean value theorem that $\nabla u$ blows up at the origin as well. Accordingly, 
as the convergence in \eqref{defprofile} holds uniformly on every compact set of $\mathbb{R}^N\backslash \{0\}$, one may wonder whether the estimate \eqref{equiv} holds after differentiation in space. Up to our knowledge, such a result is not available in the literature, and we only have an upper bound proved by Abdelhedi and Zaag in \cite{AZarix20}:
\[
|\nabla u(x,T)|\le C|x|^{-\frac{p+1}{p-1}}|\ln |x||^{\frac{p+3}{4(p-1)}}.
\]
In this paper, we sharply adapt the construction method of \cite{BKnon94} and \cite{MZdm97} and show that \eqref{equiv} holds after differentiation in space.

\bigskip

That method was first introduced by Bressan in \cite{Brejde92} and \cite{Breiumj90} for the heat equation with an exponential source, then in Bricmont and Kupiainen \cite{BKnon94} and Merle and Zaag \cite{MZdm97} for equation \eqref{equa-u-vortex}. It consists in a formal approach where the profile is obtained through an inner/outer expansion with matching asymptotics, followed by a rigorous proof where the PDE is linearized around the profile candidate. Then, the negative part of the spectrum (which is infinite dimensional) is controlled thanks to the decaying properties of the Laplacian, whereas the nonnegative part (which is finite dimensional) is controlled thanks to the degree theory.

\bigskip

As a matter of fact, we mention that in \cite{BKnon94} and \cite{MZdm97}, the authors constructed a blowup solution satisfying the so-called \textit{intermediate} blowup profile, valid for $0\le t<T$:
\begin{equation}\label{bkmz}
\left\| (T-t)^{\frac{1}{p-1}}u(.,t)  - \left(p-1 + \frac{(p-1)^2}{4p} \frac{|.|^2}{(T-t)|\ln(T-t)|} \right)^{-\frac{1}{p-1}} \right\|_{L^\infty} \le \frac{C}{1 + \sqrt{|\ln(T-t)|}}.
\end{equation}
The derivation of the so-called \textit{final} profile \eqref{equiv}, valid at $t=T$,  was later done by Zaag in \cite{ZAAihn98}.

\medskip

Note that
the  constructive method given in  those works was efficiently used  
in a very large  class of parabolic equations such as in Merle and Zaag \cite{MZnon97} for   quenching   problems;
in Duong et al \cite{DNZtunisian-2017}, Nguyen and Zaag in \cite{NZsns16},   and Tayachi and Zaag \cite{TZpre15} for perturbed nonlinear source terms;  in Duong et al  \cite{DNZMAMS20,DNZIHPNA21}, Masmoudi and Zaag \cite{MZjfa08} and Nouaili and Zaag \cite{NZ2017}   for the Complex Ginzburg-Landau equation; and Duong  \cite{DJDE2019,DJFA2019}, and also in Nouaili and Zaag \cite{NZcpde15}  for non-variational complex valued heat equations.

\bigskip

More generally, a large literature has been devoted in the last 20 years to the construction of solutions of PDEs with prescribed behavior, beyond the case of parabolic equations such as: Type I anisotropic heat equation by Merle et al \cite{MRSIMRNI20}; Type II blowup for heat equation by del Pino et al \cite{PMWAMS19,PMWAPDE20,PMWQYARXIV20,PMWZDCDS20}, Schweyer \cite{SJFA12}, Collot \cite{CAPDE17}, Merle et al \cite{CMRJAMS20}, Harada \cite{HAIHPANL20,HAPDE20}, Seki \cite{SJDE20}; blowup for nonlinear Schr\"odinger equation by  Merle \cite{MCMP90}, Martel and Rapha\"el \cite{MRASENS18}, Merle et al \cite{MRRCJM15,MRSDMJ14,MRRSARXIV20}, Rapha\"el and Szeftel \cite{RSCMP09};  Blowup for wave equations by C\^ote and Zaag \cite{CZCPAM13}, Ming et al \cite{MRTSJMA15}, Collot \cite{CPMJEMS20}, Hillairet and Rapha\"el \cite{HRAPDE12}, Krieger et al \cite{KSTDM09,KSTIM08},  Ghoul et al \cite{GINJDE18}, Rapha\"el and Rodnianski \cite{RRPMIHES12}, Donninger and Sch\"orkhuber \cite{DSCMP16}; Blowup for KdV and gKdV \cite{MAJM05}, C\^ote \cite{CJFA06,CDM07}; Schr\"odinger map by Merle et al \cite{MRRim13}; Heat flow map  by Ghoul et al \cite{GINAPDE19},  Rapha\"el and  Schweyer \cite{RSAPDE14}, D\'avila et al \cite{DPMIM20}; Keller  Segel system by Ghoul et al \cite{CGMNARXIV20-a,CGMNARXIV20-b}, Schweyer and Rapha\"el \cite{RSma14}; Prandtl's system by Collot et al \cite{CGIMARXIV18}; Stefan problem by Hadzic and Rapha\"el \cite{HRJEMS19}; 3-dimensional compressible fluids by
Merle et al \cite{MRRSARXIV20};
quenching phenomena for MEMS devices by Duong and Zaag \cite{DZM3AS19};  the Gierer-Meinhardt system  by Duong et al   \cite{DGNZARXIV21,DNZarxiv2020}.

\medskip
Next,  we would like to mention some papers where upper bounds on the profile of the gradient were obtained, though with no sharp equivalent.
For example,   in Tayachi and Zaag \cite{TZpre15}, the authors constructed    a blowup solution to the following equation
$$ \partial_t u = \Delta u + \mu |\nabla u|^q  + |u|^{p-1} u, $$
where
$$ \mu > 0, p>3 \text{  and  } q = \frac{2p}{p+1}.$$
In addition to that, they  proved  that the gradient also blows up and gave an upper bound on the gradient's final profile near $0$:
$$  |\nabla u(x,T)|  \le \left\{   \begin{array}{rcl}
& &C   \frac{|x|^{-\frac{p+1}{p-1}}}{|\ln|x||^{\frac{1-3p}{(p-1)^2}-\epsilon}} \text{ if } p \in (3,7],\\[0.2cm]
& & C   \frac{|x|^{-\frac{p+1}{p-1}}}{|\ln|x||^{\frac{-p^2 +2p-5}{2(p-1)^2}-\epsilon}}  \text{ if }   p>7,
\end{array} \right.  $$
for some small $\epsilon>0$.
Similarly, such  upper bounds were  proved  in many situations such as for a     perturbed nonlinear heat equations with a gradient and a non-local term as in Abdelhedi and Zaag  \cite{AZarix20}; and nonlinear heat equations involving a critical power nonlinear gradient term as  in Ghoul et al  \cite{GNZpre16a}. 

\bigskip

In our opinion, 
it is exciting and important  to obtain the exact profile of the gradient. By sharply adapting the previously mentioned construction method to handle the gradient estimates, we get the following result:
\begin{theorem}[Construction of a blowup solution with a sharp determination of the gradient behavior]\label{Theorem-similarity} There exist initial  data $u_0 \in W^{1,\infty}(\R^N)$ such that  equation \eqref{equa-u-vortex} has a unique solution which  blows  up in finite time $T(u_0) > 0$ only at the origin. In particular, the following holds:
\begin{itemize}
\item[$(i)$] (Intermediate profile): For all $t\in [0,T)$,
\begin{eqnarray}
\left\| (T-t)^{\frac{1}{p-1}} u(.,t) -   \varphi_0\left(z \right)  \right\|_{L^\infty} \le \frac{C \ln(|\ln(T-t)|)}{1+ |\ln(T-t)|},\label{profile-u-theo}
\end{eqnarray}
and 
\begin{eqnarray}
\left\| (T-t)^{\frac{1}{p-1} +\frac{1}{2}} \nabla u(.,t) -\frac{\nabla\varphi_0(z)}{\sqrt{|\ln (T-t)|}}  \right\|_{L^\infty}  \le   \frac{C\ln(|\ln(T-t)|)}{1 + |\ln (T-t)|},\label{profile-intermediate-main-theorem} 
\end{eqnarray}
where
\begin{equation}\label{defphi0}
z=\frac{x}{\sqrt{(T-t)|\ln(T-t)|}},\;\;
\varphi_0(z) =  \left( p-1 + b |z|^2\right)^{-\frac{1}{p-1}}
\mbox{ and }b = \frac{(p-1)^2}{4p}.
\end{equation}
\item[$(ii)$] (Final profile):  $u(x,T)\equiv \lim_{t \to T} u(x,t)$ exists for all $x \ne 0$ and $ u(\cdot,T) \in C^2(\R^N \setminus \{0\})$, with a convergence holding uniformly in $C^2$ of every compact set of $\mathbb{R}^N\backslash\{0\}$. In particular, we have the following 
equivalents:
\begin{eqnarray}
u(x,T)\sim u^*(x) \equiv \left[ \frac{b}{2}  \frac{|x|^2}{|\ln|x||} \right]^{-\frac{1}{p-1}},\label{final-profile}
\end{eqnarray}
and 
\begin{equation}\label{final-profile-main-theorem}
  \nabla u(x,T)\sim \nabla u^*(x)  =   -  \frac{\sqrt{2b}}{p-1} \frac{x}{|x| \sqrt{ |\ln|x||}}    \left[\frac{b|x|^2}{2|\ln|x||} \right]^{-\frac{p+1}{2(p-1)}}  \text{ as } x \to 0.
\end{equation}
\end{itemize}
\end{theorem}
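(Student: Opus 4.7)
My plan is to follow the construction scheme of Bricmont-Kupiainen \cite{BKnon94} and Merle-Zaag \cite{MZdm97}, enlarging the classical shrinking set so that it encodes sharp gradient controls, and then to derive the final profile identities by a parabolic regularity argument at intermediate times, in the spirit of Zaag \cite{ZAAihn98}. Setting $y=x/\sqrt{T-t}$, $s=-\ln(T-t)$, and $w(y,s)=(T-t)^{1/(p-1)}u(x,t)$, equation \eqref{equa-u-vortex} becomes
\begin{equation*}
\partial_s w=\Delta w-\tfrac{1}{2}y\cdot\nabla w-\tfrac{1}{p-1}w+|w|^{p-1}w,
\end{equation*}
and writing $w=\varphi_0(y/\sqrt s)+q$, the perturbation $q$ obeys $\partial_s q=(\Lc+V(y,s))q+B(q)+R(y,s)$, where $\Lc=\Delta-\tfrac{1}{2} y\cdot\nabla+1$ has, in a Gaussian-weighted $L^2$ space, the spectrum $\{1-k/2:k\in\N\}$, giving one unstable mode ($k=0$), $N+1$ neutral modes ($k=1,2$), and infinitely many stable modes.

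Next, I would introduce a shrinking set $\Vc_A(s)\subset W^{1,\infty}(\R^N)$ that generalizes the usual one. On top of the classical controls on the coefficients $q_0,q_1,q_2$ of the finite-dimensional projection inside $|y|\le K\sqrt s$ and the weighted $L^\infty$ bounds on $q$ in the inner and outer zones, I impose new bounds on $\nabla_y q$: in the inner region, $|\nabla_y q|\lesssim s^{-3/2}\ln s$, consistent with the intended leading term $s^{-1/2}\nabla_z\varphi_0(z)$ of $\nabla_y w$; in the outer region, a self-similar bound matching the power-law decay of $\nabla\varphi_0$ at infinity. The heuristic is that differentiating the asymptotics of $q$ inherited from the BK-MZ construction should automatically give the claimed gradient profile, and the point is to close such estimates rigorously.

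Then, I would run the topological shooting argument exactly as in \cite{BKnon94,MZdm97}: initial data are parametrized by the coefficients of $q$ on the $N+2$ unstable and neutral modes at $s_0=-\ln T$. The reduction step shows that the only way for a trajectory to exit $\Vc_A(s)$ is via the unstable mode, and a Brouwer-type degree argument yields a parameter selection for which the trajectory stays in $\Vc_A(s)$ forever. The new gradient control is propagated by differentiating Duhamel's formula: applying $\nabla_y$ and commuting with $e^{\tau(\Lc+V)}$ produces a manageable drift correction, and the smoothing estimates of the linear semigroup, combined with the quadratic smallness of $B(q)$ and the polynomial $s$-decay of $R$, allow one to close the estimate. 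Transferring back to original variables yields \eqref{profile-u-theo} and \eqref{profile-intermediate-main-theorem}.

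Finally, for the final profile, I fix $x\ne 0$ small and define $t(x)<T$ by $|x|^2=K_0(T-t(x))|\ln(T-t(x))|$ for a large constant $K_0$. The intermediate estimates \eqref{profile-u-theo} and \eqref{profile-intermediate-main-theorem} evaluated at $t=t(x)$ already provide the leading order of $u(x,t(x))$ and $\nabla u(x,t(x))$. A parabolic rescaling on $B(x,|x|/2)\times[t(x),T]$, together with standard interior estimates, then shows that both quantities vary only at lower order on $[t(x),T)$, so that passing to the limit yields \eqref{final-profile} and \eqref{final-profile-main-theorem}. The main obstacle I anticipate is propagating the sharp gradient control inside $\Vc_A(s)$: since $\nabla$ does not commute with $\Lc$ and the commutator shifts the spectrum, one has to be careful to extract the correct leading term $\nabla\varphi_0/\sqrt s$ rather than a spurious slower decay, which requires a refined two-term matched asymptotic expansion of $q$ together with a delicate Hermite-mode analysis of its gradient in the inner region.
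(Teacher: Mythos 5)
Your plan correctly reproduces the BK--MZ scaffolding, but it misses the single idea that makes the theorem work, and as written your strategy for the gradient cannot close.

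You propose to keep the classical controls on $q_0,q_1,q_2,q_-,q_e$ ``on top'' of new bounds on $\nabla_y q$ added to the shrinking set, and then to propagate those gradient bounds by differentiating Duhamel's formula and tracking the commutator $[\nabla,\Lc]$. This is not what the paper does, and there is a reason it cannot be made to work in that form. If the shrinking set only encodes the classical $\|q(s)\|_{L^\infty}\lesssim 1/\sqrt{s}$ control of \cite{BKnon94,MZdm97}, there is no mechanism by which the semigroup can manufacture a gradient bound $\|\nabla q(s)\|_{L^\infty}\lesssim \ln s/s$, which is asymptotically \emph{smaller} than $\|q(s)\|_{L^\infty}$ itself; parabolic smoothing turns $L^\infty$ control into $W^{1,\infty}$ control of the \emph{same} size, never a better one. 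Your conjectured inner bound $|\nabla_y q|\lesssim s^{-3/2}\ln s$ is likewise unreachable from the classical bounds: it would have to coexist with $|q|\sim s^{-1/2}$ at $|y|\sim\sqrt s$, which is inconsistent. The actual key contribution of the paper (see Definition \ref{shrinking-set} and the remark after the theorem) is to sharpen the shrinking set on $q$ itself, not on $\nabla q$: the bounds $|q_0|,|q_1|\le A/s^2$, $|q_2|\le A^2\ln s/s^2$, $\|q_-/(1+|y|^3)\|_\infty\le A^6\ln s/s^{5/2}$ and $\|q_e\|_\infty\le A^7\ln s/s$ yield $\|q(s)\|_{L^\infty}\lesssim A^7\ln s/s$ (estimate \eqref{esti-q-V-A}), improving the classical $1/\sqrt s$ error in \eqref{bkmz}. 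Only once this improvement on $q$ is in place does the gradient estimate follow, and it follows essentially for free.

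Moreover, the gradient estimate is \emph{not} obtained by incorporating $\nabla q$ into the shrinking set and worrying about $[\nabla,\Lc]$. The paper derives it a posteriori (Proposition \ref{control-nabla-q}) by a short-time parabolic regularity argument: write Duhamel from $s_0$ (or from $s-1$), and apply the semigroup smoothing estimates \eqref{estimate-nabla-semi-group}--\eqref{nabla-semi-le-nabla}, which bound $\nabla e^{\theta\Lc}$ on $L^\infty$ and $W^{1,\infty}$ for $\theta\le 1$. Because the time window is of length at most $1$, no spectral analysis of $\nabla$ against $\Lc$ is needed; the ``commutator shifts the spectrum'' issue you anticipate, and the ``delicate Hermite-mode analysis of the gradient'' you propose to resolve it, are entirely bypassed. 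The logic is: sharper shrinking set $\Rightarrow$ $\|q(s)\|_\infty\lesssim\ln s/s$ $\Rightarrow$ $\|\nabla q(s)\|_\infty\lesssim\ln s/s$ by smoothing, and the latter is exactly \eqref{profile-intermediate-main-theorem} since $\nabla_y\varphi=\nabla_y\varphi_0(y/\sqrt s)=s^{-1/2}\nabla_z\varphi_0(z)$ already carries the advertised $1/\sqrt s$ prefactor. Your treatment of the final-profile step (rescaling near $t(x)$ and parabolic regularity on $[t(x),T)$) is in the right spirit, but it too requires the improved rate $\ln s/s$ to feed the initial data of the rescaled problem, without which the Gr\"onwall estimate for $\Vc=\nabla_\xi\,\mathcal U$ does not produce an asymptotic equivalent.
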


\begin{remark}
The main contribution of the theorem is to discover \eqref{profile-intermediate-main-theorem}  and its consequence  \eqref{final-profile-main-theorem}.  The key estimate is to show a more precise error estimate in \eqref{profile-u-theo}, namely
$$ \frac{C \ln|\ln(T-t)|}{1+|\ln(T-t)|},$$
which is sharper than the bound of the previous works \cite{BKnon94} and \cite{MZdm97}, namely
$$ \frac{C}{1 + \sqrt{|\ln(T-t)|}},$$
as already mentioned in \eqref{bkmz}. Indeed, this latter estimate is not enough to derive a sharp estimate on the gradient.
%
%
The corner stone of the proof lays in fact in the introduction of a new \textit{Shrinking set} below in Definition \ref{shrinking-set}, with sharper estimates.
We also mention other  situations where a clever adaptation of the shrinking set lead to the derivation of a sharper blowup behavior. This was in particular the case a  complex-valued heat equation with no variational structure, where the behavior of the imaginary part was derived by Duong in \cite{DJDE2019,DJFA2019}. We also mention the case of the  Complex  Ginzburg-Landau equation (CGL) in some critical setting in  Duong et al \cite{DNZMAMS20} and Nouaili and Zaag \cite{NZ2017}; and in the subcritical range in Duong et al \cite{DNZIHPNA21} too, where a higher order expansion was derived thanks to a good adaptation of the shrinking set.
%
\end{remark}
\begin{remark}[Stability] As we explained right before the statement of this theorem, the rigorous proof goes through the reduction of the question to a finite-dimensional problem, then the solution of this finite-dimensional problem thanks to the degree theory. In fact, it is possible to make 
the interpretation of the parameters of the finite-dimensional problem in terms of the blowup time and the blowup point, as originally done  in \cite{MZdm97}, in order to show that the behavior described by \eqref{profile-u-theo}, \eqref{profile-intermediate-main-theorem}, \eqref{final-profile} and \eqref{final-profile-main-theorem} is stable under perturbations of initial data. More precisely, let us denote by $\hat u_0$ the initial data constructed in Theorem \ref{Theorem-similarity} and $\hat T$ its blowup time. Then, there exists a neighborhood $	\mathcal{N}(u_0)$ of $u_0$ in $W^{1,\infty}$ 
such that for all $u_0 \in \mathcal{N}_0$, the corresponding solution $u(x,t)$  of equation \eqref{equa-u-vortex} blows up in finite time $T(u_0)$ at some unique blowup point $a(u_0)$, with the blowup profiles \eqref{profile-u-theo}, \eqref{profile-intermediate-main-theorem}, \eqref{final-profile} and \eqref{final-profile-main-theorem} which remain valid for $u(x,t)$, just by replacing $ x $ by $x - a(u_0)$. Moreover, we have the following limit
$$   (a(u_0),  T(u_0)) \to (0,\hat T) \text{ as } u_0 \to \hat u_0.  $$

\end{remark}

\begin{remark}
This construction works also in bounded domains with homogenous Dirichlet or Neumann boundary conditions, with almost the same proof. The only difference with the case of the whole space lays in the use of some cut-off argument, which runs smoothly, as in Mahmoudi, Nouaili and Zaag \cite{MNZNon2016} or in Duong and Zaag \cite{DZM3AS19}. 
\end{remark}





\bigskip


Throughout this paper, $C$ is a constant which depends only on $N$ and $p$, and whose value may change from line to line. If we need a constant depending on other parameters, we will specify its dependence on parameters. However, we may omit the dependence on $K$, the constant introduced  in the cut-off \eqref{c4defini-chi-y-s} below, in order to make the notations lighter.

\section{Mathematical setting}\label{math-setting}
Let us consider $T>0$ and introduce  the  following similarity variables
 \begin{equation}\label{similarity-variable}
y  = \frac{x}{\sqrt{T-t}},     \quad  s  = -\ln(T-t)  \text{ and  } w (y,s)  = (T-t)^{\frac{1}{p-1}} u(x,t).
\end{equation}
With this transformation, $u(x,t)$ satisfies equation \eqref{equa-u-vortex} for all $(x,t)\in \mathbb{R}^N\times [0,T)$ if and only if $w(y,s)$ satisfies the following equation for all $(y,s)\in \mathbb{R}^N\times[-\ln T, +\infty)$:
\begin{equation}\label{equa-w}
\partial_s w = \Delta w - \frac{1}{2} y \cdot \nabla w - \frac{w}{p-1} +|w|^{p-1}w.
\end{equation}
Since our goal is to construct a solution $u$ 	to equation  \eqref{equa-u-vortex} that blows up in finite time $T$ like $(T-t)^{-\frac{1}{p-1}}$, we may change it thanks to \eqref{similarity-variable} to the construction of a global  solution $w(y,s)$ to \eqref{equa-w}  such that 
$$\epsilon_0 \le \limsup_{s \to +\infty} \| w(s)\|_{L^\infty}  \le \frac{1}{\epsilon_0}$$
for some $\epsilon_0>0$.
In \cite{MZdm97}, the authors further specified this goal and succeeded in showing 
the existence of a solution to equation \eqref{equa-w} with the following profile: 
\begin{equation}\label{defphi}
\varphi(y,s) = \left( p-1 + b \frac{|y|^2}{s} \right)^{-\frac{1}{p-1}} + \frac{\kappa N}{2ps}, \text{ with } \kappa = (p-1)^{-\frac{1}{p-1}},
\end{equation}
in the sense that for all $s\ge -\ln T$ and $T >0$ small enough, we  have
$$ \|w(s) -  \varphi(s) \|_{L^\infty} \le \frac{C}{\sqrt{s}}.$$
In this paper, we show that the techniques of \cite{MZdm97} can be delicately improved to
get  
a smaller error term,
namely: for all $s\ge -\ln T$,
\begin{equation}\label{aim0}
 \|  w (s)  -  \varphi(s)  \|_{L^\infty(\R^N)} \le \frac{C \ln s }{s}.
\end{equation}
Introducing 
\begin{equation}\label{defq}
q  =   w - \varphi,
\end{equation}
we see from equation \eqref{equa-w} that $q$ satisfies the following equation:
\begin{equation}\label{equa-q}
\partial_s q = (\mathcal{L} + V) q  +     B(q) + R(y,s),
\end{equation}
where
\begin{eqnarray}
\mathcal{L}  &=& \Delta  - \frac{1}{2} y \cdot  \nabla + {\rm Id},\label{defi-mathcal-L}\\
V &=&   p (\varphi^{p-1} -\frac{1}{p-1}),\label{defi-V}\\
B(q) & = & \left| q + \varphi \right|^{p-1}(q + \varphi) - \varphi^p -p \varphi^{p-1} q,\label{defi-B-q}\\
R(y,s) &=& - \partial_s \varphi + \Delta \varphi -\frac{1}{2} y \cdot \nabla \varphi  - \frac{\varphi}{p-1} +|\varphi|^{p-1}\varphi.\label{defi-R}
\end{eqnarray}
Thus, from \eqref{aim0}, our aim becomes to construct a solution of equation \eqref{equa-q} such that for all $s\ge - \ln T$,
\begin{equation}\label{aim}
\|q(s)\|_{L^\infty}\le \frac{C \ln s }{s}.
\end{equation}

\medskip
In order to understand the dynamics  of  equation \eqref{equa-q}, we need to make some comments on the linear, nonlinear and remainder terms in that equation:

-    \textit{ Linear operator  $\mathcal{L}$}:  It  is self-adjoint in the space $L^2_\rho(\mathbb{R}^N)$ where $\rho(y) = e^{-|y|^2/4}/(4\pi)^{N/2}$,
with     explicit spectrum
\begin{equation}\label{spec}
 \text{Spec}(\mathcal{L}) = \{  \lambda_m=  1 - \frac{m}{2}   |    m \in \N  \}.
\end{equation}
Corresponding to  the eigenvalue $\lambda_m$, we have the eigenspace $\mathcal{E}_m$
\begin{equation}\label{eigen-space-E-j}
 \mathcal{E}_m =  \left<   h_{m_1} (y_1). h_{m_2} (y_2).... h_{m_N} (y_N)   \left|  \right.  m_1 + ...+ m_N = m    \right>, 
\end{equation}
where   $h_{\ell}$  is  the  (rescaled) Hermite  polynomial in one  dimension, defined by
$$h_\ell (\xi)= 	 \sum_{j=0}^{\left[ \frac{\ell}{2}\right]} (-1)^j\frac{ \ell! }{j!(\ell-2j)!}\xi^{\ell-2j}.$$

-  \textit{Potential $V$}:  It has the following properties:
\begin{itemize}
\item[$(i)$]   $V(., s) \to 0$  in $L^2_\rho(\mathbb{R}^N)$ as  $s \to + \infty$, which implies in particular that its effect is negligible with respect to the effect of    $\mathcal{L}$ (except maybe on the zero eigenspace of $\mathcal{L}$).
\item[$(ii)$]   $V(y,s)$ is almost a constant   outside the blowup region, i.e for  $|y| \geq K_0 \sqrt s$ with $K_0>0$ large enough. In particular,    we have the following estimate
$$     \sup_{ s \geq  s_\epsilon, \frac{|y|}{ \sqrt s}  \geq \mathcal{C}_\epsilon} \left|  V(y,s)   - \left( -\frac{p}{p-1}\right)  \right|   \leq  \epsilon, $$
for some   $\epsilon > 0$,     $ \mathcal{C}_\epsilon >  m$, and  $s_\epsilon$ large enough. Since  $ - \frac{p}{p-1 }  <   -1  $ and the largest eigenvalue of $\mathcal{L}$ is 1, we can say that  $\mathcal{L} + V$ has  a  strictly  negative  spectrum outside the blowup region.
 \end{itemize}
 
 - \textit{Nonlinear term  $B(q)$}: It is superlinear, in the sense that it satisfies the following estimate
 $$  |B(q) | \le C|q|^{\bar p}\mbox{ where } \bar p = \min(2,p)>1. $$ 
 
 - \textit{Remainder term $ R$}:  It is small, in the sense that 
 $$ \forall s\ge -\ln T,\;\; \|R(s)\|_{L^\infty}  \le  \frac{C}{s},  $$
which is natural by definition \eqref{defi-R}, since the profile $\varphi(y,s)$ defined in \eqref{defphi} is in fact an approximate solution of equation \eqref{equa-w}.
Following the decomposition in \cite{MZdm97} together with the remark  given in item $(ii)$ above,   we introduce the following decomposition,  for any     $r  \in L^\infty(\mathbb{R}^N)$:
\begin{equation}\label{c4R=R-b+R-e}
r (y)=     \chi (y,s)   r(y) + (1- \chi (y,s) )    r (y) \equiv r_b (y,s) +  r_e (y,s),
\end{equation} 
where   $\chi (y,s)$   defined by
\begin{equation}\label{c4defini-chi-y-s}
\chi (y,s)  = \chi_0 \left(  \frac{|y|}{K \sqrt s}   \right),
\end{equation}
$\chi_0 $ being a one-dimensional cut-off satisfying
\begin{equation}\label{c4defini-chi-0}
\text{Supp } (\chi_0) \subset [0,2], \quad 0 \leq \chi_0(\xi) \leq 1, \forall \xi\ge 0 \text{ and  }   \chi_0 (\xi)= 1, \forall \xi \in [0,1],
 \end{equation}
and  constant $K>0$  be chosen large enough so that various estimates hold in the proof.
%
Let us remark that
\begin{eqnarray*}
\text{Supp }(r_b(s))  & \subset & \{ |y| \leq 2 K\sqrt{s} \},\\
\text{Supp }(r_e(s)) & \subset & \{ |y| \geq K \sqrt{s}\}.
 \end{eqnarray*}
In addition, since  the set of  eigenfunctions  of $\mathcal{L}$  makes a basis of $L^2_\rho$, we can  write $r_b$ as follows
\begin{equation}\label{decom}
r_b (y,s)  =  r_0(s)  + r_1(s) \cdot y +   y^T \cdot r_2(s) \cdot y  - 2 \text{ Tr}(r_2(s))  + r_-(y,s),
\end{equation}
where  
\begin{equation}\label{c4defini-R-i}
r_i(s) =   \left(   P_\beta ( r_b(s) )  \right)_{\beta \in \mathbb{N}^N, |\beta|= i}, \forall  i \geq 0,
\end{equation}
with $P_\beta(r_b)$  being  the projection  of  $r_b$ on   the   eigenfunction    $h_\beta$ defined as follows:
\begin{equation}\label{c4defin-P-i}
P_\beta (r_b(s)) =  \int_{\mathbb{R}^N}  r_b(y,s)   \frac{h_\beta}{\|h_\beta\|_{L^2_\rho(\mathbb{R}^N)}^2}   \rho dy, \forall \beta \in \mathbb{N}^N,
\end{equation}
and 
\begin{equation}\label{c4defini-R--}
r_-(y,s) = P_-(r_b)  =   \sum_{\beta \in \mathbb{R}^N, |\beta| \geq 3}  P_\beta (r_b(s)) h_\beta(y).
\end{equation}
Note that in \eqref{decom}, we are isolating the nonnegative eigenvalues of $\mathcal{L}$ (see \eqref{spec}), since they are more delicate to handle.\\
 Finally, we have just defined the following expansion, which will be extremely useful in the proof:
\begin{eqnarray}
r (y) &  =  &  r_0(s) + r_1(s) \cdot y  +  y^T \cdot r_2(s) \cdot y  - 2 \text{ Tr}( r_2(s))   + r_-(y,s) + r_e (y,s).\label{c4represent-non-perp}
\end{eqnarray}
Note that this decomposition depends on time $s$ appearing in the cut-off  \eqref{c4defini-chi-y-s}. If ever $r(y) =r(y,s)$, then this gives a natural choice for the parameter $s$ in the cut-off. If not, then we will specify which time is selected (this is the case in particular in Lemma \ref{lembk} below).

\section{The major steps of the proof  assuming some technical results  }

Our method is in fact a refinement of the one used by Merle and Zaag in \cite{MZdm97}. For that reason, we will follow the same framework of that paper, improving the estimates (and the presentation too!). In order to keep  the paper in a reasonable length, we will insist only on the improvements, and go rather quickly when the estimates are the same as in \cite{MZdm97}.

\medskip

In order to achieve our goal in \eqref{aim}, we will in fact construct a solution of equation \eqref{equa-q} trapped in some shrinking set, which is an improved version of the sets already introduced by Bricmont and Kupiainen in \cite{BKnon94} and Merle and Zaag in \cite{MZdm97}. More precisely, this is our improved version:
\begin{definition}[Shrinking set]\label{shrinking-set}
For any $K>0$, $A>0$ and $s>0$, we define  $ V_{K,A} (s)$
as a subset of $L^\infty ( \R^N)$ satisfying 
\begin{eqnarray*}
\left|q_0(s)    \right| \le \frac{A}{s^2}, \quad   | q_{1}  (s)| \leq \frac{A}{s^2} \text{ and   }  |q_{2} (s)| \leq \frac{A^2 \ln s}{s^2}, \\
\left\| \frac{q_-(\cdot,s)}{1+|y|^3} \right\|_{L^\infty(\mathbb{R}^N)}  \leq \frac{A^6\ln s}{s^\frac{5}{2}},  \text{ and } \|q_e(\cdot,s)\|_{L^\infty(\R^N)} \leq \frac{A^7 \ln  s}{s},
 \end{eqnarray*}
 where $q_j(s), q_-(\cdot,s), $ and $ q_e(\cdot,s)$ are the components of $q(\cdot,s)$ as defined in 
 \eqref{c4represent-non-perp}. 
\end{definition}

\begin{remark}
Note that the parameter $K>0$ is involved in the decomposition \eqref{c4represent-non-perp}, through the cut-off in \eqref{c4defini-chi-y-s}. Note also that by definition,
we immediately derive that if $q \in V_{K,A}(s)$, then 
\begin{equation}\label{esti-q-V-A}
    \|q(s)\|_{L^\infty}  \le C \frac{A^7 \ln s}{s},
\end{equation}
for some $  C = C(K)>0$. Thus, as announced right before this definition, our goal becomes to find some positive  $K$, $A$ and initial time $s_0$
so we are able to construct a solution of equation \eqref{equa-q} satisfying
\[
\forall s\ge 
s_0,
\;\; q(s)\in V_{K,A}(s).
\]
\end{remark}

\medskip

Naturally, we first start by specifying the form of initial data we are taking.
As in \cite{MZdm97}, our initial data will depend on a set of parameters $(d_0,d_1) \in\R^{1+N}$, and will have a slightly modified form (this form was first used by Masmoudi and Zaag in \cite{MZjfa08}):
\begin{equation}\label{defi-initial-data}
 \psi_{K,A}(s_0,d_0,d_1,y) = \psi(s_0,d_0,d_1,y) = 
 \frac{A}{s_0^2} (d_0 + d_1 \cdot y) \chi_0\left( \frac{2|y|}{K\sqrt{s_0}}\right),
\end{equation}
where 
$\chi_0$ is defined \eqref{c4defini-chi-0} (note that we may or may not keep the parameters in our notation for initial data). 
In the following, we identify a set for the parameters $d_0$ and $d_1$, so that at time $s=s_0$, the solution is already in $V_{K,A}(s_0)$. We also estimate the other components of the decomposition \eqref{c4represent-non-perp}.
\begin{proposition}[Preparing initial data]\label{propo-initial-data} For all  $K\ge 1$ and $ A \ge 1$,
we can define 	$s_1(K,A) \ge 1$ such that 	 for all $ s_0 \ge s_1$ the following holds with $\psi=\psi(s_0,d_0,d_1)$ defined in \eqref{defi-initial-data}:
\begin{itemize}  
\item[$(i)$] There exits   $\mathcal{D}_{ s_0}     \subset   [-2,2]^{1+N}$  such that the function
\begin{eqnarray*}
\bar \Gamma :  \mathbb{R}^{1+N}   & \to &   \mathbb{R}^{1 +N}\\
(d_0, d_1)   & \mapsto   &  \left( \psi_0, \psi_1  \right)(s_0),
\end{eqnarray*}
 is  affine,  one to  one   from   $\mathcal{D}_{s_0}$   to 	$\hat{\mathcal{V}}_A (s_0),$ where 
\begin{equation}\label{c4defini-hat-mathcal-V-A}
\hat{\mathcal{V}}_A (s) =  \left[-\frac{A}{s^2}, \frac{A}{s^2} \right]^{1+N}.
\end{equation}  
In addition,
we have   
\begin{equation}\label{c4deg-Gamma-1-neq-0}
 \bar \Gamma \left|_{\partial \mathcal{D}_{s_0}}  \right.   \subset   \partial \hat{\mathcal{V}}_A (s_0) \mbox{ and } 
 {\rm deg} \left(  \bar \Gamma \left. \right|_{ \partial  \mathcal{D}_{s_0} }  \right) \neq 0.
\end{equation}
\item[$(ii)$]  For all $(d_0,d_1) \in \mathcal{D}_{s_0}$, $\psi \in V_{K,A}(s_0)$. More precisely, we have 
	\begin{eqnarray*}
\left|\psi_0    \right| \le \frac{A}{s^2_0}, \quad   | \psi_{1}  | \leq \frac{A}{s^2_0},\;\; 
|\psi_{2} | \leq \frac{ 1 }{s^2_0}  ,\\
\left\| \frac{\psi_-(\cdot)}{1+|y|^3} \right\|_{L^\infty(\mathbb{R}^N)}  \leq \frac{\ln s_0}{s_0^\frac{5}{2}},  \;\;
\psi_e(\cdot) \equiv 0,
 \end{eqnarray*}
 and
 \begin{equation}\label{estimate-initial-data}
    \left\| \psi (\cdot) \right\|_{W^{1,\infty}(\mathbb{R}^N)} \le \frac{1}{s_0},
\end{equation}
where $\psi_0, \psi_1, \psi_2, \psi_-$ and $\psi_e$ are  the components of $\psi$  as  in  \eqref{decom}.
\end{itemize}
\end{proposition}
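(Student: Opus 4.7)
The plan is to exploit the fact that the initial data is essentially a linear polynomial truncated by a compactly supported cutoff, so all the relevant projections can be computed almost explicitly, and the spectrum of $\mathcal{L}$ becomes very nearly diagonal on this data.

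First I would observe that $\operatorname{supp}\psi\subset\{|y|\le K\sqrt{s_0}\}$ (the outer cutoff in \eqref{defi-initial-data} vanishes there), while the cutoff $\chi(y,s_0)$ of \eqref{c4defini-chi-y-s} equals $1$ on that set. Hence $\psi_b=\psi$ and $\psi_e\equiv 0$ for free. Then for part $(i)$, I would compute $\psi_0(s_0)$ and $\psi_1(s_0)$ via the projection formula \eqref{c4defin-P-i}. Because $\chi_0(2|y|/(K\sqrt{s_0}))\rho(y)$ is symmetric in each coordinate, cross terms cancel and one obtains the diagonal expression
\[
\psi_0(s_0)=\frac{A}{s_0^2}\,\alpha_0(s_0)\,d_0,\qquad \psi_{1,i}(s_0)=\frac{A}{s_0^2}\,\alpha_1(s_0)\,d_{1,i},
\]
with $\alpha_0(s_0),\alpha_1(s_0)\to 1$ as $s_0\to\infty$ by dominated convergence. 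Thus $\bar\Gamma$ is linear and, for $s_0$ large, it is a bijection; defining $\mathcal{D}_{s_0}=\bar\Gamma^{-1}(\hat{\mathcal{V}}_A(s_0))$ yields the rectangle $[-1/\alpha_0,1/\alpha_0]\times[-1/\alpha_1,1/\alpha_1]^N\subset[-2,2]^{1+N}$. The boundary condition is automatic from bijectivity and $\mathrm{deg}(\bar\Gamma|_{\partial\mathcal{D}_{s_0}})=\pm 1\neq 0$ since $\bar\Gamma$ is an affine isomorphism.

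For part $(ii)$, the bounds on $\psi_0$ and $\psi_1$ are built into the definition of $\hat{\mathcal{V}}_A(s_0)$. The $W^{1,\infty}$ estimate is a direct calculation: the gradient of the cutoff scales as $1/(K\sqrt{s_0})$, so multiplying by the prefactor $A/s_0^2$ and the linear factor (of size at most $CK\sqrt{s_0}$ on the support) gives $\|\psi\|_{W^{1,\infty}}\le C(A,K)/s_0^{3/2}\le 1/s_0$ for $s_0$ large enough. For $\psi_2$, projecting $\psi$ onto the degree-$2$ Hermite basis: the bulk integrals (where $\chi_0\equiv 1$) vanish by orthogonality of $d_0+d_1\cdot y$ with $h_\beta$ for $|\beta|=2$, leaving only a Gaussian tail contribution of order $e^{-cK^2 s_0}$, which is far smaller than $1/s_0^2$.

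The most delicate piece is the bound on $\psi_-$. I would write
\[
\psi_-(y,s_0)=\psi(y)-\psi_0-\psi_1\cdot y-\bigl(y^T\psi_2\, y-2\,\mathrm{Tr}(\psi_2)\bigr)
\]
and split $\R^N$ into three regions. On the inner region $|y|\le K\sqrt{s_0}/2$ the cutoff is identically one, so $\psi(y)=(A/s_0^2)(d_0+d_1\cdot y)$ and the subtraction is exact up to the exponentially small Gaussian-tail errors identified for $\alpha_0,\alpha_1,\psi_2$, yielding $|\psi_-|(1+|y|^3)^{-1}\le C e^{-cs_0}$. On the transition region and the outer region one uses the coarse bound $|\psi_-(y)|\le C(A/s_0^2)(1+|y|)$; dividing by $1+|y|^3\ge cK^3 s_0^{3/2}$ gives $\|\psi_-/(1+|y|^3)\|_{L^\infty}\le C(A,K)/s_0^{3}$, which is dominated by $\ln s_0/s_0^{5/2}$ for $s_0$ large. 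The main obstacle is purely bookkeeping: keeping track of the dependence on $K$ and $A$ in the region-splitting, and verifying that every constant collapses into the single condition $s_0\ge s_1(K,A)$. No step involves a deep difficulty once one notices that the cutoff fully annihilates the outer component.
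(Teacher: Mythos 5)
The paper itself omits the proof, referring to Proposition 4.5 of Tayachi--Zaag \cite{TZpre15}; your reconstruction (explicit diagonal computation of the Hermite projections using the radial symmetry of the cutoff, Gaussian-tail estimates for $\psi_2$ and the inner contribution to $\psi_-$, and a region split exploiting that $1+|y|^3\gtrsim K^3 s_0^{3/2}$ outside the bulk) is precisely the standard argument that reference carries out, so the proposal is correct and follows essentially the same route.
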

\begin{proof}
The proof is quite similar to  \cite[Proposition 4.5]{TZpre15}, except for the gradient estimate, which follows very easily by the same techniques. For that reason, we omit the proof.
\end{proof}

\bigskip

Now, we would like to introduce the following parabolic regularity estimate, valid for solutions of equation \eqref{equa-q} starting from initial data \eqref{defi-initial-data}.
\begin{proposition}[Parabolic regularity]\label{control-nabla-q}
For all $K \ge 1$ and $A \ge 1$,  there exists $s_2(K, A) \ge 1$  such that for all  $s_0 \ge s_2$ the following holds:\\
Consider $q$  is a solution    to  \eqref{equa-q} on $[s_0,s^*]$  for some  $ s^*  >  s_0$ 
with $q(s_0)=\psi(s_0,d_0,d_1)$
defined as in \eqref{defi-initial-data} for some $ (d_0,d_1) \in \mathcal{D}_{s_0}$  given in Proposition \ref{propo-initial-data}. Assume in addition that $q(s) \in V_{K,A}(s)$ for all $s \in [s_0,s^*]$. Then, it follows that
	\begin{equation}
	\| \nabla q(s)\|_{L^\infty} \le \frac{\tilde C A^7 \ln s}{s } ,\forall s \in [s_0,s^*],
	\end{equation}
	where  $\tilde C$ depends only on $K$.
\end{proposition}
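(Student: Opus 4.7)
The plan is to combine a Duhamel representation for $q$ with the parabolic smoothing estimate for the semigroup $e^{\sigma \mathcal{L}}$. The first step is to establish the pointwise smoothing bound
\[
\|\nabla e^{\sigma \mathcal{L}} g\|_{L^\infty(\mathbb{R}^N)} \le \frac{C e^{\sigma/2}}{\sqrt{1-e^{-\sigma}}}\,\|g\|_{L^\infty(\mathbb{R}^N)},
\]
which follows by differentiating Mehler's kernel for $\mathcal{L} = \Delta - \tfrac{1}{2}y\cdot\nabla + \mathrm{Id}$ and integrating the resulting Gaussian. In particular, for $\sigma \in (0,1]$ this reduces to $\|\nabla e^{\sigma \mathcal{L}}\|_{L^\infty \to L^\infty} \le C/\sqrt{\sigma}$. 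When the input is already in $W^{1,\infty}$, the commutator identity $[\nabla,\mathcal{L}] = -\tfrac{1}{2}\nabla$ yields the complementary bound $\|\nabla e^{\sigma\mathcal{L}} g\|_{L^\infty} \le e^{\sigma/2}\|\nabla g\|_{L^\infty}$, which will be useful to absorb the contribution from initial data.

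Next, writing $F = Vq + B(q) + R$, the Duhamel formula gives, for any $s_1 \in [s_0, s]$,
\[
\nabla q(s) = \nabla e^{(s-s_1)\mathcal{L}} q(s_1) + \int_{s_1}^{s} \nabla e^{(s-\tau)\mathcal{L}} F(\tau)\,d\tau.
\]
Using that $V$ is uniformly bounded (since the profile $\varphi$ is), that $|B(q)| \le C|q|^{\bar p}$ with $\bar p > 1$, that $\|R(\tau)\|_{L^\infty}\le C/\tau$, together with the shrinking-set bound $\|q(\tau)\|_{L^\infty} \le C A^7 \ln \tau / \tau$ from \eqref{esti-q-V-A}, I obtain $\|F(\tau)\|_{L^\infty} \le C A^7 \ln \tau / \tau$ provided $s_0$ is large enough (the nonlinear term is a lower-order correction thanks to $\bar p > 1$).

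I then split the time interval in two. For $s \in [s_0, s_0+1]$, I take $s_1 = s_0$ and exploit the initial gradient bound $\|\nabla \psi\|_{L^\infty} \le 1/s_0$ from \eqref{estimate-initial-data} together with the commutator-based semigroup estimate; the source integral contributes $\int_{s_0}^s (s-\tau)^{-1/2} A^7 \ln \tau/\tau\, d\tau \le C A^7 \ln s_0 / s_0$, which fits inside the target bound. For $s \ge s_0 + 1$, I take $s_1 = s-1$: then $\|\nabla e^{\mathcal{L}} q(s-1)\|_{L^\infty} \le C \|q(s-1)\|_{L^\infty} \le C A^7 \ln s / s$, and the integral over $[s-1, s]$ contributes the same order. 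Combining both pieces yields the desired $\tilde C A^7 \ln s/s$ estimate.

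The main obstacle I foresee is essentially bookkeeping rather than conceptual: one must verify that the various constants and powers of $A$ arising from the potential term $Vq$ (and to a lesser extent from $B(q)$) do not degrade the exponent $A^7$, which is already fixed by the $\|q_e\|_{L^\infty}$ component of the shrinking set. A secondary point is that the unit-length smoothing interval is not too long for the commutator-based bound to be effective; however, the factor $e^{1/2}$ is harmless, and the integrable singularity $(s-\tau)^{-1/2}$ in the semigroup estimate is precisely what prevents any logarithmic loss when passing from $q$ to $\nabla q$.
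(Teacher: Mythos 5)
Your proposal is correct and follows essentially the same route as the paper: the same two semigroup bounds (the $L^\infty\to W^{1,\infty}$ smoothing estimate and the $W^{1,\infty}$-preserving one, which the paper cites from \cite{TZpre15} and you re-derive via the commutator $[\nabla,\mathcal L]=-\tfrac12\nabla$), the same bound $\|Vq+B(q)+R\|_{L^\infty}\le CA^7\ln\tau/\tau$, and the same case split with Duhamel started at $s_0$ for $s\le s_0+1$ and at $s-1$ for $s>s_0+1$. No gap to report.
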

\begin{proof}
We follow the technique  first introduced by Ebde and Zaag  \cite{EZsema11} then used by Tayachi and Zaag  \cite{TZpre15}. Since this step is crucial for the improvement, we give the proof, with 2 cases, depending on the position of $s$ with respect to $s_0+1$.

- \textbf{Case 1:  $s \le s_0 +1$}. We write equation \eqref{equa-q} in its integral form 
\begin{equation}\label{integral-form-q}
q(s) =  e^{(s-s_0) \mathcal{L}} q(s_0)  + \int_{s_0}^s e^{(s-\tau) \mathcal{L}} G(\tau) d\tau,
\end{equation}
where
\begin{equation}\label{defi-G-tau}
     G(\tau) = V(\tau)q(\tau) + B(q(\tau)) + R(\tau), 
\end{equation}
and $e^{ t \mathcal{L}}$ is the semi-group corresponding to  $\mathcal{L}$ defined in \eqref{defi-mathcal-L} (see \cite{TZpre15} for more details).  We now consider $s_1' = \min(s_0+1,s^*)$  and  $ s \in [s_0,s_1']$.
Taking $s_0\ge 1$, we see that 
for all $ \tau \in [s_0,s]$, we have
$$ s_0 \le  \tau \le s   \le  s_0+1 \le 2s_0, $$
which implies that
\begin{equation}\label{estima-s-tau-case-1}
\frac{1}{s}   \le  \frac{1}{\tau} \le  \frac{1}{s_0} \le \frac{2}{s}.
\end{equation}
Now, applying the operator  $\nabla $  to  \eqref{integral-form-q}, we write
\begin{equation}\label{equa-nabla-q-}
\nabla q(s) = \nabla e^{(s-s_0)\mathcal{L}} q(s_0)  + \int_{s_0	}^s \nabla e^{(s-\tau)\mathcal{L}}  G(\tau) d\tau.
\end{equation}
Let us mention to  item $(ii)$ of Lemma 4.15 in \cite{TZpre15}  that 
\begin{equation}\label{estimate-nabla-semi-group}
\mbox{if }f\in L^\infty(\RN), \mbox{ then }
\| \nabla e^{\theta \mathcal{L}} f \|_{L^\infty}  \le \frac{C e^{\frac{\theta}{2}}}{\sqrt{1-e^{-\theta}}} \|f\|_{L^\infty},
\end{equation}
and
\begin{equation}\label{nabla-semi-le-nabla}
 \mbox{if }f\in W^{1,\infty}(\RN), \mbox{ then }
 \| \nabla e^{\theta \mathcal{L}} f \|_{L^\infty}  \le C e^{\frac{\theta}{2}} \| \nabla f\|_{L^\infty},
\end{equation}
for some $ \theta >0$. 

\noindent
\medskip
Using 
 \eqref{estimate-initial-data} and  \eqref{nabla-semi-le-nabla} with $\theta =s  - s_0 \le 1$, we obtain 
 \begin{eqnarray}
 \| \nabla  e^{(s-s_0)\mathcal{L}}q(s_0)\|_{L^\infty}   & \le &  C e^{\frac{s -s_0}{2}} \|\nabla q(s_0)\|_{L^\infty} \le    \frac C{s_0} \le  \frac{2C}{s}.
 \label{estima-nabla-q-s-0}
 \end{eqnarray}
 In addition to that, we see from \eqref{defi-G-tau} that for $\tau \in [s_0,s]$,
 \begin{eqnarray*}
 \left| G(\tau)    \right| \le  |V(\tau)q(\tau)  | + |B(q(\tau))| + |R(\tau)|.
 \end{eqnarray*}
 Since $V$ is bounded by definition \eqref{defi-V} and \eqref{esti-q-V-A} holds from the fact that $q(s) \in V_{K,A}(s)$ for all $s \in [s_0, s^*]$, it follows that
 \begin{eqnarray*}
 \left| V (\tau)q  (\tau)\right| \le \left|V(\tau) \right||q(\tau)| \le \frac{C A^7 \ln \tau}{ \tau}
 \end{eqnarray*}
 (with $C=C(K)$ here).
 Using again \eqref{esti-q-V-A} together  with Lemma \ref{lemma-V-q}, we write
 \begin{eqnarray*}
 \left| B(q(\tau)) \right|  \le  C |q(\tau)|^{\bar p} \le C \left( \frac{CA^7 \ln \tau}{\tau} \right)^{\bar p} \le C \frac{ \ln \tau}{\tau}, 
 \end{eqnarray*}
 for $s_0$ large enough, since $\bar p=\min(p,2) >1$. We also have 
 \begin{eqnarray*}
  |R(\tau)|  \le   \frac{C}{\tau} .
 \end{eqnarray*}
 Combining these estimates we deduce
 \begin{eqnarray*}
 \|G(\tau)\|_{L^\infty} \le  \frac{C A^7 \ln \tau}{\tau} \le  \frac{2CA^7 \ln s}{s}, 
 \end{eqnarray*}
 thanks to  \eqref{estima-s-tau-case-1}.
Consequently, applying \eqref{estimate-nabla-semi-group} with $\theta = s -\tau \in [0,s-s_0]\subset [0,1]$, we get  
\begin{eqnarray}
\|   \nabla e^{(s-\tau) \mathcal{L}}      G(\tau)\|_{L^\infty}    & \le &     \frac{C}{\sqrt{1 - e^{-(s-\tau)}}} \frac{A^7 \ln s}{s}.\label{esti-semi-nabla-G}
\end{eqnarray}
Finally, taking the $L^\infty$ norm of \eqref{equa-nabla-q-} and using again \eqref{estima-s-tau-case-1}, we get
\begin{eqnarray*}
\| \nabla q(s)\|_{L^\infty} & \le & \| \nabla  e^{(s-s_0)\mathcal{L}}q(s_0)\|_{L^\infty} + \int_{s_0}^s \|   \nabla e^{(s-\tau) \mathcal{L}}      G(\tau)\|_{L^\infty} d\tau \\
&\le &   \frac{C}{s} +  \frac{C A^7 \ln s}{s} \int_{s_0}^s \frac{1}{ \sqrt{1 - e^{-(s-\tau)}}} d\tau \\
& \le & \frac{C_1(K) A^7 \ln s}{s}.
\end{eqnarray*}

- \textbf{Case 2}: We consider the case $s > s_0 +1$, hence 
$s^* \ge s > s_0+1$. Taking $s_0\ge 1$, 
it follows that
$$s \ge  s_0+1\ge  2 \text{ and  } s = s-1+1 \le 2(s-1).$$ 
Note
that for all  $\tau  \in [s-1,s]$,  we have
\begin{equation}\label{esti-s-tau-2s-second}
    \frac{1}{s}  \le   \frac{1}{\tau} \le  \frac{2}{s}.
\end{equation}
Using  Duhamel's principle  with initial time $ s-1$, we have
\begin{equation*}
q(s)  = e^{ \mathcal{L}} q(s-1) + \int_{s-1}^{s}  e^{(s-\tau) \mathcal{L}} G(\tau) d\tau.
\end{equation*}
As for \eqref{equa-nabla-q-}, we get
\begin{eqnarray}
\nabla q(s)  =  \nabla e^{ \mathcal{L}} q(s-1)  + \int_{s-1}^{s}   \nabla  e^{(s-\tau) \mathcal{L}} G(\tau) d\tau.\label{nabla-s-s-1q}
\end{eqnarray}
Proceeding as for
\eqref{estima-nabla-q-s-0} and \eqref{esti-semi-nabla-G}, using in particular \eqref{estimate-nabla-semi-group}, \eqref{esti-q-V-A} and \eqref{esti-s-tau-2s-second}, we get 
\begin{eqnarray*}
\| \nabla e^{ \mathcal{L}} q(s-1)\|_{L^\infty}  & \le &  \frac{C}{\sqrt{1-e^{-1}}} \|q(s-1)\|_{L^\infty}  
\le  C\frac{A^7\log(s-1)}{s-1}
\le  2C\frac{A^7\log s}{s},\\
\|\nabla e^{(s-\tau) \mathcal{L}} G(\tau)\|_{L^\infty}  & \le &  \frac{CA^7}{ \sqrt{1-e^{-(s-\tau)}} } \frac{\ln \tau}{\tau}
  \le \frac{2CA^7}{ \sqrt{1-e^{-(s-\tau)}} }  \frac{\ln s}{s}.
\end{eqnarray*}
Using the inequalities and   \eqref{nabla-s-s-1q}, we obtain
\begin{eqnarray*}
\|   \nabla q(s)   \|_{L^\infty} & \le & \| \nabla e^{ \mathcal{L}} q(s-1)\|_{L^\infty}  + \int_{s-1}^s \|\nabla e^{(s-\tau) \mathcal{L}} G(\tau)\|_{L^\infty} d\tau   \\
& \le & \frac{2CA^7 \ln s}{s} +\frac{2CA^7 \ln s}{s} \int_{s-1}^s  \frac{1}{\sqrt{1-e^{-(s-\tau)}}} d\tau \le \frac{C_2(K)A^7 \ln s}{s}.
\end{eqnarray*}
 Finally, introducing
 $\tilde C(K) = \max(C_1,C_2)$, 
 we obtain
 $$ \| \nabla q(s)\|_{L^\infty }  \le \frac{\tilde CA^7 \ln s}{s}, \forall s \in [s_0,s^*],$$
 which concludes the proof.
\end{proof}

\noindent 
Recalling what we wrote in the remark following Definition \ref{shrinking-set} and in \eqref{defi-initial-data}, our goal is 
to  construct initial data (at time $s=s_0$) $ \psi(s_0,d_0,d_1)$, 
so that  the solution of \eqref{equa-q} exists for all $s\in[s_0,\infty)$ and satisfies
\begin{equation}\label{estima-q-in-V-A}
  q(s) \in V_{K, A}(s), \forall s \ge s_0.
\end{equation}
This is possible, as we state in the following:
\begin{proposition}[Existence of a solution to \eqref{equa-q} trapped in $V_{A, K}$]\label{propo-existence-solu-S-t}
There exist  $A, K \ge 1,$ $s_0(A,K) \ge 1$ large enough, and $(d_0,d_1) \in \R^{1+N}$ such that  the solution to equation \eqref{equa-q}, corresponding to     initial data (at $s=s_0$) $\psi(s_0,d_0,d_1)$ defined in \eqref{defi-initial-data}, 
exists for all $s\in [s_0, +\infty)$ and satisfies 
\begin{equation}\label{qVA}
  q(s) \in V_{K, A}(s),  \forall s \in [s_0,+\infty),
 \end{equation}
 where the shrinking set $V_{K, A}$ is introduced in Definition \ref{shrinking-set}.
\end{proposition}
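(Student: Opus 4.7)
The strategy is the classical topological shooting argument of Merle--Zaag \cite{MZdm97} (itself a refinement of Bricmont--Kupiainen \cite{BKnon94}), adapted to the sharper shrinking set of Definition \ref{shrinking-set}. The parameters would be chosen in a precise order: first $K$ large enough so that the potential $V$ behaves like a contraction in the outer region $\{|y|\ge K\sqrt{s}\,\}$ (thanks to item $(ii)$ on the behavior of $V$), then $A$ large enough to dominate the universal constants appearing in all Duhamel estimates, and finally $s_0(A,K)$ large enough so that the higher-order terms produced along the way fit into the $\ln s/s$ margins. The argument then proceeds by contradiction: assume that for every $(d_0,d_1)\in \mathcal{D}_{s_0}$, the solution $q$ of \eqref{equa-q} starting from $\psi(s_0,d_0,d_1)$ exits $V_{K,A}$ at some finite first time $s^*(d_0,d_1)>s_0$, and aim to contradict the nonzero degree condition \eqref{c4deg-Gamma-1-neq-0}.

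The crux of the proof is an \emph{a priori reduction} step showing that at any such exit time $s^*$, only the unstable components $q_0$ and $q_1$, corresponding to the positive eigenvalues $\lambda_0=1$ and $\lambda_1=\tfrac12$ of $\mathcal{L}$ (see \eqref{spec}), can saturate their bounds, while $q_2$, $q_-$ and $q_e$ remain strictly inside the corresponding bounds in $V_{K,A}(s^*)$. I would establish this by writing the Duhamel formula for each component of the decomposition \eqref{c4represent-non-perp} and estimating the contributions of $Vq$, $B(q)$ and $R$ separately. The neutral mode $q_2$ (eigenvalue $0$) is driven by the potential $V$, which yields an effective ODE of the form $q_2' \sim -\tfrac{c}{s}\,q_2 + \mathrm{source}$, whose source terms are $o(A^2\ln s/s^2)$ once the remainder estimate $\|R\|_{L^\infty}\le C/s$ and the gradient bound from Proposition \ref{control-nabla-q} are used; hence the bound $|q_2|\le A^2\ln s/s^2$ is strict for $A$ large. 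For $q_-$, the semigroup $e^{\tau\mathcal{L}}$ decays at least like $e^{-\tau/2}$ on the range of $P_-$, which produces a strict inequality in the $A^6\ln s/s^{5/2}$ bound. For $q_e$, the choice of $K$ makes $\mathcal{L}+V$ have spectrum $\le -1+\varepsilon<0$ in the outer region; combining this contraction with Proposition \ref{control-nabla-q} to absorb the cutoff commutator $-2\nabla\chi\cdot\nabla q - \Delta\chi\, q$ yields the strict estimate $\|q_e\|_{L^\infty}<A^7\ln s/s$. Consequently, only the constraints $|q_0|\le A/s^2$ and $|q_1|\le A/s^2$ can be responsible for the exit at $s^*$, and moreover the exit is transverse because $\lambda_0,\lambda_1>0$.

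The proof is then closed by degree theory. The reduction step implies that the map
\[
\Phi\,:\,(d_0,d_1)\,\longmapsto\, \frac{s^*(d_0,d_1)^2}{A}\bigl(q_0(s^*),\,q_1(s^*)\bigr)
\]
is well-defined and continuous from $\mathcal{D}_{s_0}$ into $\partial[-1,1]^{1+N}$ (continuity follows from continuous dependence on initial data, together with transversality at the exit time). By Proposition \ref{propo-initial-data}$(i)$, its restriction to $\partial\mathcal{D}_{s_0}$ coincides, up to the natural rescaling, with $\bar\Gamma$, and hence inherits a nonzero topological degree from \eqref{c4deg-Gamma-1-neq-0}. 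This contradicts Brouwer's no-retraction theorem, so there must exist $(d_0,d_1)\in\mathcal{D}_{s_0}$ for which $s^*=+\infty$, which is exactly \eqref{qVA}.

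The main obstacle, in my view, is not the topological step (which is parallel to \cite{MZdm97}) but rather the quantitative propagation of the much sharper $\ln s/s$-type bounds through the nonlinear term $B(q)$ and through the cutoff commutators that arise when projecting onto $q_b$ and $q_e$. Those commutators involve $\nabla q$, and the only way to keep $\|\nabla q\|_{L^\infty}$ under control at the required accuracy is via Proposition \ref{control-nabla-q}. This is the genuinely new ingredient compared to \cite{MZdm97}, and it is precisely what makes the improved global estimate \eqref{aim} reachable.
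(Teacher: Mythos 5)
Your overall architecture is correct and matches the paper's: proceed by contradiction with a first exit time $s_*(d_0,d_1)$, prove that at exit only the unstable modes $(q_0,q_1)$ can saturate while $q_2,q_-,q_e$ are strictly inside their bounds, establish transversality of the crossing using $\lambda_0,\lambda_1>0$, build the map $(d_0,d_1)\mapsto \tfrac{s_*^2}{A}(q_0,q_1)(s_*)$, and contradict the nonzero degree of $\bar\Gamma$ via Brouwer. The ordering of constants ($K$, then $A$, then $s_0$) is also right.

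However, your identification of what actually makes the reduction step work is off. You argue that the cut-off commutators $-2\nabla\chi\cdot\nabla q-\Delta\chi\,q$ arising when one writes separate PDEs for $q_b$ and $q_e$ force reliance on the gradient bound of Proposition \ref{control-nabla-q}, and you promote this to \emph{the} new ingredient. The paper does not take that route at all. Its reduction (Propositions \ref{propo-ODE-mode-V-A-s} and \ref{lemma-control-u-P-1}) is run through the integral equation \eqref{integral-equation} and the Bricmont--Kupiainen/Nguyen--Zaag estimate on the semigroup $\mathcal{K}(s,\sigma)$ (Lemma \ref{lembk}), which directly bounds the five components $\theta_0,\theta_1,\theta_2,\theta_-,\theta_e$ of $\mathcal{K}(s,\sigma)v$ in terms of the components of $v$, \emph{without} ever differentiating the cut-off or invoking $\|\nabla q\|_{L^\infty}$. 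So no commutator ever appears, and Proposition \ref{control-nabla-q} plays no role in propagating the shrinking-set bounds (it is invoked in the statement of Proposition \ref{pro-reduction- to-finit-dimensional} only to fix compatible constants). Its real use is downstream, in the proof of Theorem \ref{Theorem-similarity}, to obtain the intermediate gradient estimate \eqref{profile-intermediate-main-theorem}: it is a \emph{consequence} of trapping $q$ in $V_{K,A}$, not a tool for trapping it. The genuinely new ingredient compared to \cite{MZdm97} is the sharper shrinking set of Definition \ref{shrinking-set} (with the $\ln s/s^2$, $\ln s/s^{5/2}$, $\ln s/s$ scales) together with the short-time/long-time splitting of the Duhamel integral using Lemma \ref{lembk} with the fixed window length $\ell^*=\ln A$, which is what produces the improved $\ln s/s$ rate in \eqref{aim}. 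Your route via commutators plus Proposition \ref{control-nabla-q} might be made to work (the support of $\nabla\chi$ gives an extra $1/(K\sqrt s)$), but it is a different path, it is not the one taken here, and claiming it is ``the only way'' misstates the structure of the argument. One small further imprecision: in the outer region $\mathcal{L}+V$ has top eigenvalue roughly $1-\tfrac{p}{p-1}=-\tfrac{1}{p-1}<0$, not ``$\le -1+\varepsilon$''; what matters is only strict negativity, encoded in the $e^{-(s-\sigma)/p}$ decay of \eqref{theta-e}.
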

\begin{proof}
In fact,
the proof completely  follows the  method used in \cite{BKnon94} and \cite{MZdm97} and based on 
two main arguments:
\begin{itemize}
    \item  \textit{Reduction to a finite dimensional problem}: In this step, we show that the control of $q(s)$ in the shrinking set $V_{K,A}(s)$ reduces to the control of $(q_0(s),q_1(s))$, the $(N+1)$-dimensional variable corresponding to the projection of $q(s)$ on $1$ and $y$, the expanding directions of the linear operator $\mathcal L$ \eqref{defi-mathcal-L} involved in equation \eqref{equa-q}. 
    \item \textit{  Topological argument}: The control of the $(N+1)$-dimensional variable is then performed thanks to a topological argument linked to Brouwer's lemma, where we fine-tune the $(N+1)$-dimensional variable $(d_0,d_1)$ appearing in initial data $\psi(s_0,d_0,d_1,y)$ \eqref{defi-initial-data}.
\end{itemize}
Let us now briefly present these two arguments. Consider $A \ge 1, K>1$ and $s_0(K,A)$ large enough  such that    Propositions      \ref{propo-initial-data} and \ref{control-nabla-q} hold. Other restrictions on these constants will appear below in Proposition \ref{pro-reduction- to-finit-dimensional}. Proceeding by     contradiction, we   assume that  for all $(d_0,d_1) \in \mathcal{D}_{s_0}$, there exists $s(d_0,d_1) \ge s_0$ such that  
$$ q(s(d_0,d_1)) \notin  V_{ K, A}(s(d_0,d_1)),$$
where $q_{d_0,d_1}$ (we write $q$ for simplicity) is the solution to equation \eqref{equa-q} corresponding to initial data $\psi(s_0,d_0,d_1)$ given at \eqref{defi-initial-data}.
Since $q(s_0)\in V_{K,A}(s_0)$ by item (ii) in Proposition \ref{propo-initial-data}, 
we can define $s_*(d_0,d_1)<+\infty$  such that
$$ s_*(d_0, d_1) = \sup \{ s_1 \ge s_0 \text{ such that } q(s) \in V_{K, A}(s),  \forall s \in \left[s_0, s_1 \right]    \}.  $$
From continuity in time of the solution, it follows that
$q(s_*(d_0,d_1)) \in \partial V_{K, A}(s_*(d_0,d_1))$. Here comes our first argument where we reduce the problem to a finite-dimensional one. More precisely, we have the following statement:
\begin{proposition}[Reduction in finite dimensions and transverse crossing]\label{pro-reduction- to-finit-dimensional}
There exist  parameters $K  \ge 1  $, $ A \ge 1$  
and $s_0 (A,K) \ge 1$ such that the following  property holds (in addition to Propositions \ref{propo-initial-data} and \ref{control-nabla-q}):  Assume that
\begin{itemize}
\item[$(a)$]  Initial data $q(s_0)=\psi(s_0,d_0,d_1)$, given  \eqref{defi-initial-data} for some $(d_0,d_1) \in \mathcal{D}_{s_0}$, 
\item[$(b)$] $ q(s) \in  V_{K,A }(s)   $, for all $ s \in [s_0, s_*]$ for some $s_*  \ge s_0$ with
$$  q(s_*) \in \partial   V_{K, A}(s_*).$$
\end{itemize}
  Then, we have: 
\begin{itemize}
\item[$(i)$] (\textit{Reduction to finite dimensions}): It holds that  
$(q_0(s_*),q_1(s_*)) \in \partial  \mathcal{\hat V}_A(s_*)$ defined in \eqref{c4defini-hat-mathcal-V-A}.
\item[$(ii)$] (\textit{Transverse crossing}): There exists  $\nu_0 > 0$ such that 
\begin{equation}\label{traverse-outgoing crossing}
\forall \nu \in (0,\nu_0),   (q_0,q_1)(s_* +\nu)      \notin \hat{\mathcal{V}}_{ A} (s_* +\nu),
\end{equation}
which implies  that 
$$ q(s_*+\nu) \notin V_{K, A}(s_*+\nu).   $$
\end{itemize}
\end{proposition}
Let us 
assume this
result and continue the proof of Proposition  \ref{propo-existence-solu-S-t}. We 
postpone
the proof of Proposition \ref{pro-reduction- to-finit-dimensional} later to
page \pageref{prood-of-propo-3-6} in Section \ref{section-reduction-in-finite-time}, since 
it
is long and technical.\\
Adjusting the constants $A$, $K$ and $s_0$ as suggested in Proposition \ref{pro-reduction- to-finit-dimensional}, we see that
$$   (q_0,q_1)(s_*(d_0,d_1)) \in  \partial \hat{\mathcal{V}}_A(s_*(d_0, d_1)). $$
Therefore, by definition \eqref{c4defini-hat-mathcal-V-A} of $\hat {\mathcal V}_A(s)$,  we can define $\Gamma$  as follows:
\begin{eqnarray*}
\Gamma:  \mathcal{D}_{s_0}  &\to & \partial [-1,1]^{1 +N }\\
 (d_0,d_1)  &\mapsto &  \Gamma(d_0,d_1) = \frac{s_*^2(d_0, d_1)}{A}  \left(q_0,q_1 \right) (s_*(d_0, d_1)).
\end{eqnarray*}
Note then  we have
the following properties:
\begin{itemize}
\item[$(i)$] $\Gamma$ is continuous from $\mathcal{D}_{s_0}$  to  $\partial [-1, 1]^{1+ N}$ 
thanks to
the continuity in time of $q$ on the one hand, and the continuity of $s_*(d_0, d_1)$, which is a direct consequence of the transverse crossing given in item (ii) of Proposition \ref{pro-reduction- to-finit-dimensional} above. 
\item[$(ii)$] If $(d_0,d_1)\in \partial {\mathcal D}_{s_0}$, then we see from item (i) in Proposition \ref{propo-initial-data} that $(q_0(s_0),q_1(s_0))\in \partial \hat{\mathcal
{V}}_{A}(s_0)$. Using again  the transverse crossing property, we see that $s_*(d_0,d_1)=s_0$. Hence, $\Gamma_{|\partial {\mathcal D}_{s_0}}$ is equal to $\bar \Gamma_{|\partial {\mathcal D}_{s_0}}$, where $\bar \Gamma$ was introduced in item (i) of   Proposition \ref{propo-initial-data}. In particular, $\Gamma$ has a non zero degree on the boundary.
\end{itemize}
From a corollary of Brouwer's lemma, this is a contradiction.
This concludes the proof of Proposition \ref{propo-existence-solu-S-t}, assuming that Proposition \ref{pro-reduction- to-finit-dimensional} holds. Once that proof is given in Section \ref{section-reduction-in-finite-time}, the proof will be complete.
\end{proof}
\section{The proof of Theorem  \ref{Theorem-similarity}}
This section is devoted to the proof of
Theorem \ref{Theorem-similarity}.   


\begin{proof}
$ $\\
(i) Consider the constants $K$, $A$ and $s_0$ given in Proposition \ref{propo-existence-solu-S-t}, and  $q(y,s)$ the solution of equation \eqref{equa-q} constructed there so that \eqref{qVA} holds. Thanks to \eqref{esti-q-V-A} and Proposition \ref{control-nabla-q}, it follows that
\[
\forall s\ge s_0,\;\;
\|q(s)\|_{L^\infty} + 
\|\nabla q(s)\|_{L^\infty} \le C(K,A) \frac{\ln s}s. 
\]
Introducing $T=e^{-s_0}$ then defining $w(y,s)$ and $u(x,t)$ by \eqref{defq} and \eqref{similarity-variable}, we easily see from Section \ref{math-setting} that $u(x,t)$ is a solution to equation \eqref{equa-u-vortex} that exists for all $(x,t)\in \R^N \times[0,T)$ such that 
\eqref{profile-u-theo} and \eqref{profile-intermediate-main-theorem}
hold.
In particular, we see that
\[
u(0,t)\to \infty \mbox{ and }
|\nabla u(z_0\sqrt{(T-t)|\ln(T-t)|},t)| \to \infty
\mbox{ as }t\to T,
\]
for any $z_0\neq 0$,
hence, the origin is a blow-up point both for $u$ and $\nabla u$. The fact that neither $u$ nor $\nabla u$ blows up outside the origin is in fact a consequence of item (ii).\\
(ii) The existence of $u^* \in C^2(\R^N \setminus \{0\})$ such that $u(\cdot,t) \to u^*(\cdot)$ uniformly in $C^2$ of any compact set of $\R^N\backslash \{0\}$ follows 
from the classical argument  given in
Merle
\cite{Mercpam92}. It remains  to   give the proofs 
of
\eqref{final-profile} and \eqref{final-profile-main-theorem}.  Let us consider    $x_0 \neq 0, |x_0| \le \epsilon_0$ small enough.
We then
define
\begin{equation}\label{defi-mathcal-U-V}
  \left\{  \begin{array}{rcl}
     \mathcal{U}(x_0, \xi, \tau )  &=& (T-t(x_0))^\frac{1}{p-1} u( x_0 + \xi \sqrt{ T-t(x_0)}, t(x_0) + \tau (T-t(x_0))),\\
\mathcal{V}(x_0, \xi, \tau) & =&  \nabla_\xi \mathcal{U}(x_0,\xi,  \tau).
   \end{array}
   \right.
\end{equation}
Consider some $K>0$.
We remark that  once $\varepsilon_0$ is fixed
small enough,  there    uniquely  exists $t(x_0)<T$ such that 
\begin{equation}\label{defi-t-x-0}
    |x_0|  = K \sqrt{(T-t(x_0)) | \ln (T-t(x_0))|} .  
\end{equation}
It is 
obvious that
\begin{equation}\label{t(x)-to-T}
    t(x_0) \to T \text{ as } x_0 \to 0.
\end{equation}
Applying $t= t(x_0)$ to         \eqref{profile-u-theo} and \eqref{profile-intermediate-main-theorem}, we  derive from the definition \eqref{defphi0} of the profile $\varphi_0$ that
\begin{eqnarray}
\left|  \mathcal{U}(x_0, \xi, 0)     - \left( p-1 + b K^2\right)^{-\frac{1}{p-1}}       \right| &\leq &     \frac{C}{| \ln( T-t(x_0))|^\frac{1}{4}},\label{esti-mathcal-U-tau=0}\\
\left|  \mathcal{V}(x_0, \xi, 0)      +\frac{x_0}{|x_0|} \frac{2bK}{(p-1)\sqrt{|\ln(T-t(x_0))| }}  \left( p-1  + b K^2\right)^{-\frac{p}{p-1}}       \right| &\leq &     \frac{C}{| \ln( T-t(x_0))|^\frac{3}{4}},\label{estima-mathcal-V-tau=0}
\end{eqnarray}
for all $|\xi| \leq  |\ln(T-t(x_0))|^\frac{1}{4}$. Next, let us define
for $t in [0,1)$
\begin{eqnarray}
\hat{\mathcal{U}}(\tau) &= &\left( (p-1)(1 -\tau) + b K^2\right)^{-\frac{1}{p-1}}, \label{hat-U-tau}\\
\hat{\mathcal{V}}(\tau) &=& - \frac{x_0}{|x_0|} \frac{2 b K}{(p-1) |\ln(T-t(x_0))|^\frac{1}{2}} \left( (p-1)(1 -\tau)  + b K^2\right)^{-\frac{p}{p-1}} .\label{hat-V-tau}
\end{eqnarray}
In particular,  these functions  solve the following system
\begin{align}
\hat{\mathcal{U}'} (\tau)  &= \hat{ \mathcal{U}}^p(\tau),\label{equhu}\\
\hat{\mathcal{V}'}(\tau)  &= p \hat{ \mathcal{U}}^{p-1}(\tau)  \hat{\mathcal{V}} (\tau)\nonumber,
\end{align}

\medskip
Our  aim is to prove the following estimate
\begin{eqnarray}
\sup_{|\xi| \leq \frac{1}{4} |\ln (T-t(x_0))|^\frac{1}{4}, \tau \in [0,1)}\left|  \mathcal{U}(x_0, \xi, \tau )  -  \hat{\mathcal{U}}(\tau)   \right| & \leq &\frac{C}{ 1 + |\ln (T-t(x_0))|^\frac{1}{4}},\label{estima-pro-mathcam-U-vortex}\\
\sup_{|\xi| \leq \frac{1}{16} |\ln (T-t(x_0))|^\frac{1}{4}, \tau \in [0,1)}\left|  \mathcal{V}(x_0, \xi, \tau )  -  \hat{\mathcal{V}}(\tau)   \right| &\leq &  \frac{C}{1 + |\ln (T-t(x_0))|^\frac{3}{4}}.\label{estima-pro-mathcam-V-vortex}
\end{eqnarray}

- \textit{Proof of  \eqref{estima-pro-mathcam-U-vortex}}: 
We don't give the details, since
the result is quite the same as 
in 
\cite[page 1564]{DJFA2019}. 
In particular,
it is independent 
from
\eqref{estima-pro-mathcam-V-vortex}.
Roughly speaking, up to some intricate cut-off estimates, we are simply using the continuity with respect to initial data for solutions of equation \eqref{equa-u-vortex}, since ${\mathcal U}$ is indeed a solution of that equation, thanks to the scaling invariance, and so does $\hat{\mathcal U}$, thanks to \eqref{equhu}.

\medskip
-\textit{Proof of  \eqref{estima-pro-mathcam-V-vortex}}:
This step is new, though the idea behind it is the same as for \eqref{estima-pro-mathcam-U-vortex}: the continuity with respect to initial data of the PDE obtained by differentiation of \eqref{equa-u-vortex}, up to some cut-off estimates. However, one should bear in mind that without the sharp improvement we have made in the definition of the shrinking set (see Definition \ref{shrinking-set} above), this step is impossible to carry on. As a matter of fact, this is the very place where one understands how crucial is our contribution in this paper.\\
Using  \eqref{equa-u-vortex} and definition \eqref{defi-mathcal-U-V},  we have
\begin{eqnarray}
\partial_\tau \mathcal{U}    = \Delta \mathcal{U} + \left|\mathcal{U} \right|^{p-1} \mathcal{U},\label{equa-mathcal-U}
\end{eqnarray}
and since \eqref{estima-pro-mathcam-U-vortex} ensures that $\mathcal{U} >0$ on   $\{ |\xi| \leq \frac{1}{4} |\ln (T-t(x_0))|^\frac{1}{4}\}  $, then,
we readily derive the following PDE satisfied by 
$\mathcal{V}$:
\begin{equation}\label{equa-mathcal-V}
    \partial_\tau {\mathcal V} = \Delta {\mathcal V} + p \mathcal{U}^{p-1} \mathcal{V}.
\end{equation}
By using the parabolic regularity for equation \eqref{equa-mathcal-V}, together with \eqref{estima-mathcal-V-tau=0} and  \eqref{estima-pro-mathcam-U-vortex}, we derive the 
following
rough estimate:
\begin{eqnarray}
\sup_{|\xi| \le \frac{1}{4} |\ln(T-t(x_0))|^\frac{1}{4}, \tau \in [0,1) } \left|\mathcal{V}(x_0,\xi,\tau ) \right|   \le \frac{C(K)}{\sqrt{|\ln(T-t(x_0))|}}.\label{rough-mathcal-V-tau-0-1}
\end{eqnarray}
   We now introduce 
$$    \bar { \mathcal{V}} (x_0, \xi, \tau)  = \mathcal{V}( x_0, \xi, \tau )  - \hat{\mathcal{V}} (\tau),       $$
and
$$ \tilde{  \mathcal{V}}_2 (x_0, \xi, \tau )= \chi_1(\xi) \bar{ \mathcal{V}} (x_0,\xi,\tau), \text{ where } \chi_1(\xi) = \chi_0 \left(\frac{16 |\xi|}{|\ln(T-t(x_0))|^\frac{1}{4}} \right), $$
and $\chi_0$ defined 
in  \eqref{c4defini-chi-0}. Note that $\mathcal{V}$ is a vector 
with
$     \mathcal{V}  =    (\mathcal{V}_1,...,\mathcal{V}_N).     $
 We can write the following equation satisfied by $ \tilde{\mathcal{V}}_2$: 
\begin{equation}\label{equa-mathcal{V}}
 \partial_\tau \mathcal{\tilde V}_2  = \Delta  \mathcal{\tilde V}_2   + p\mathcal{U}^{p-1} \mathcal{\tilde V}_2 +  G(\xi,\tau),     
\end{equation}
where
\begin{eqnarray*}
G(\xi, \tau) & = &  \Delta \chi_1 \mathcal{V} + \Delta \chi_1 \hat{\mathcal{V}} - 2( \text{div}(\nabla \chi_1 \mathcal{V}_1),..., \text{div}(\nabla \chi_1 \mathcal{V}_N) ) +p \chi_1 \hat{\mathcal{V}} (  \mathcal{U}^{p-1} - \hat{\mathcal{U}}^{p-1}(\tau) ) .
  \end{eqnarray*}
  As a matter of fact, using \eqref{rough-mathcal-V-tau-0-1} and $\chi_1$'s definition, we have 
  \begin{eqnarray*}
| \nabla \chi_1 \mathcal{V}_j|  + |\Delta \chi_1 \mathcal{V}| & \leq & \frac{C}{|\ln(T-t(x_0))|^\frac{3}{4}}, \text{ for 
any
 } j 
 =1,\dots,
 N,\label{estimat-Delta-chi-0-nala-chi-0}
  \end{eqnarray*}
  In addition to that,  we derive from  \eqref{estima-pro-mathcam-U-vortex} and \eqref{hat-V-tau} that 
 \begin{eqnarray*}
  \left|    \chi_1 \hat{\mathcal{V}} (  \mathcal{U}^{p-1} - \hat{\mathcal{U}}^{p-1}(\tau) ) \right| \leq \frac{C}{|\ln(T-t(x_0))|^\frac{3}{4}}.
\end{eqnarray*} 
Recall the facts  that for all $s -\sigma >0$,$g \in L^\infty$ and $\vec{h}=(h_1,...,h_N), h_i \in L^\infty$, we have 
$$  \|e^{(s-\sigma) \Delta} g \|_{L^\infty} \le \|g\|_{L^\infty} \text{ and } \| e^{(s-\sigma) \Delta} \text{div}(\vec{h})\|_{L^\infty}  \le \frac{C}{\sqrt{s-\sigma}} \|\vec{h}\|_{L^\infty}, $$
where $\|\vec{h}\|_{L^\infty} = \sum_{i=1}^N \|h_i\|_{L^\infty}$. Then, it follows from \eqref{estimat-Delta-chi-0-nala-chi-0} that   
\begin{equation}\label{estimate-simi-group-G}
\| 	e^{(s -\sigma)\Delta} G \|_{L^\infty}  \le \frac{C}{|\ln(T-t(x_0))|^\frac{3}{4}} \left( 1 + \frac{1}{\sqrt{s -\sigma}}  \right) \text{ for all } s -\sigma > 0.
 \end{equation}

  \medskip
  \noindent
Now, we rewrite \eqref{equa-mathcal{V}} under integral form
$$ \mathcal{\tilde{V}}_2(\tau) = e^{\tau \Delta} \mathcal{\tilde{V}}_2(0) + \int_0^\tau e^{(s-\sigma)\Delta} ( p \mathcal{U}^{p-1} \mathcal{\tilde{V}}_2 + G) d\sigma.$$
 Taking 
 the
 $L^\infty$ norm,
 then using \eqref{estima-mathcal-V-tau=0} and \eqref{estimate-simi-group-G}, together with the definitions of $\tilde{\mathcal{V}}_2$ and $\bar{\mathcal{V}}$, we obtain
 \begin{eqnarray*}
 \|\mathcal{V}_2(\tau)\|_{L^\infty}    \leq  \frac{C}{|\ln(T-t(x_0))|^\frac{3}{4}} + C\int_0^\tau \|\mathcal{V}_2(\sigma)\|_{L^\infty} d\sigma.
 \end{eqnarray*}
Thanks to Grönwall's   lemma, we derive that
$$\|\mathcal{\tilde V}_2(\tau)\|_{L^\infty}   \leq  \frac{C(K_0)}{| \ln(T-t(x_0))|^\frac{3}{4}}, \text{  for all } \tau \in [0, 1),$$
which  concludes \eqref{estima-pro-mathcam-V-vortex}.

\noindent
\medskip
In addition to  that,  we 
apply a
parabolic  regularity 
technique
to  \eqref{equa-mathcal-U} and \eqref{equa-mathcal-V}  (see more in \cite{TZpre15})
to
get 
\begin{eqnarray*}
  \forall   \tau \in  \left[\frac{1}{2},1 \right) \text{ and } |\xi| \leq  \frac{1}{16} |\ln(T-t(x_0))|^\frac{1}{4},    |\partial_\tau\mathcal{U}(x_0, \xi,\tau)| + |\partial_\tau \mathcal{V}(x_0,\xi,\tau)| \leq C,
\end{eqnarray*}
which ensures  the existence of  $ \lim_{\tau \to 1}\mathcal{U}(x_0,0,\tau)  $ and $\lim_{\tau \to 1}\mathcal{V}(x_0,0,\tau) $. We now give the final asymptotic behaviors to  $u^*$ and $\nabla_x u^*$:  From  \eqref{defi-t-x-0} and \eqref{t(x)-to-T}, we get   
\begin{eqnarray*}
\ln(T-t(x_0))  \sim   2 \ln|x_0| \text{ and }
T-t(x_0)   \sim   \frac{1}{K^2}   \frac{|x_0|^2}{2 |\ln |x_0||},
\end{eqnarray*}
as $x_0\to 0$.
Taking $\xi =0$
and making $\tau \to 1$ in
 \eqref{estima-pro-mathcam-U-vortex} and  \eqref{estima-pro-mathcam-V-vortex}, we obtain
\begin{eqnarray*}
u(x_0,T) &=& (T-t(x_0))^{-\frac{1}{p-1}} \lim_{\tau \to 1}\mathcal{U}(x_0,0,\tau) \sim  \left[  \frac{b}{2} \frac{|x_0|^2}{|\ln|x_0||}\right]^{-\frac{1}{p-1}},\\
\nabla u(x_0,T) &=&  (T-t(x_0))^{-\frac{1}{p-1}-\frac{1}{2}}\lim_{\tau \to 1}\mathcal{V}(x_0,0,\tau) \sim -\frac{\sqrt{2b}}{p-1} \frac{x_0}{|x_0|} \frac{1}{\sqrt{|\ln|x_0||}}\left[  \frac{b}{2} \frac{|x_0|^2}{|\ln|x_0||}\right]^{-\frac{1}{p-1}-\frac{1}{2}},
\end{eqnarray*}
as $x_0 \to 0$.
This concludes the proof 
of   Theorem \ref{Theorem-similarity}.
\end{proof}

\section{Reduction in finite dimensions and transverse crossing}
\label{section-reduction-in-finite-time}
We prove Proposition \ref{pro-reduction- to-finit-dimensional} in this section. 
We proceed in two parts:\\
- In Part 1, we understand the dynamics of equation \eqref{equa-q} inside the new shrinking set given in Definition \ref{shrinking-set}. This is in fact the heart of our argument.\\ 
- In Part 2, we conclude the proof of Proposition \ref{pro-reduction- to-finit-dimensional}.

\medskip

\textbf{Part 1: Dynamics of equation \eqref{equa-q}}

In this 
part,
we follow the dynamics of each component of the solution, as introduced in the expansion \eqref{c4represent-non-perp}. This is our statement:
\begin{proposition}
[Dynamics of equation \eqref{equa-q}]
\label{propo-ODE-mode-V-A-s}
There exist  $K_3 \geq 1$ and $ A_3 \geq 1$ such that for  all $ K \geq K_3, A \geq A_3$ and $ \ell^*>0   $,   there exists    $s_3(A,K,\ell^*) > 1$ such that for all $\sigma \ge s_0 \ge s_3$,     the following property is valid:\\
Consider $q(y,s)$ a solution of equation \eqref{equa-q} 
such that $ q(s) \in V_{K, A}(s) $, for all $s \in [\sigma, \sigma +\ell]$ for some $\ell\in[0,\ell^*]$. If $\sigma=s_0$, we further assume that $q(y,s_0) = \psi(s_0,d_0,d_1,y)$ defined in  \eqref{defi-initial-data}, for some $(d_0,d_1) \in \mathcal{D}_{s_0}$ defined  in Proposition \ref{propo-initial-data}.\\
	Then, for all $s \in [\sigma, \sigma + \ell]$,
	the following properties hold: 
\begin{itemize}
\item[$(i)$] (Unstable modes
$q_0$ and $q_1$):
\begin{eqnarray*}
\left|  q_0'(s)  - q_0 (s)  \right|  \leq  \frac{\bar C}{s^2} \text{ and }
\left|  q'_1(s) - \frac{1}{2}  q_1 (s)  \right| \leq  \frac{\bar C}{s^2},
\end{eqnarray*}
for some constant $\bar C>0$.
\item[$(ii)$]  (Neutral mode $q_2$):
\begin{equation}
 \left|   q_2 '(s)   + \frac{2 }{s} q_2(s)   \right| \leq \frac{CA }{s^3}, \label{estima-priori-q-2}
\end{equation}
\item[$(iii)$]  \text{(Control of negative  spectrum part  $q_- $ and  outer part $q_e$)}:

- If  $\sigma > s_0$, then we have 
\begin{eqnarray}
\left\| \frac{q_-(.,s)}{1+|y|^3} \right\|_{L^\infty} \hspace{-0.3cm}  &\leq & \frac{C \ln s}{s^\frac{5}{2}} \left( A^6 e^{-\frac{s-\sigma}{2}} + A^7 e^{-(s-\sigma)^2}  + A^2 e^{(s-\sigma)}((s-\sigma)^2+1)   + s -\sigma \right) \label{estima-q-1-y-3-P-1-stric},\\[0.2cm]
\|q_e(.,s)\|_{L^\infty} \hspace{-0.3cm} & \leq & \frac{C\ln s}{s} \left(    A^7 e^{-\frac{s-\sigma}{p}}  +  A^6 e^{s-\sigma}   +   s-\sigma  \right) \label{estina-q-e--P-1}.
\end{eqnarray}

- If  $\sigma = s_0$, then we have 
\begin{eqnarray}
\left\| \frac{q_-(.,s)}{1+|y|^3} \right\|_{L^\infty}  &\leq & \frac{C\ln s}{s^\frac{5}{2}} \left( 1  + s -s_0  \right) \label{estima-q-1-y-3-P-1-s-0},\\[0.2cm]
\|q_e(.,s)\|_{L^\infty} & \leq & \frac{C\ln s}{s} \left(1 + s-s_0  \right) \label{estina-q-e--P-1-s-0}.
\end{eqnarray}
\end{itemize}
\end{proposition}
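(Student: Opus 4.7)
The plan is to project equation \eqref{equa-q} onto each of the five components appearing in the decomposition \eqref{c4represent-non-perp}, and, for each projection, extract the leading linear dynamics from $(\mathcal{L}+V)q$ while bounding the contributions from $B(q)$ and $R$ using the hypothesis $q(s)\in V_{K,A}(s)$ together with the pointwise bounds $|V|\le C/s$, $|B(q)|\le C|q|^{\bar p}$, and $\|R(s)\|_{L^\infty}\le C/s$ recalled in Section \ref{math-setting}. For the unstable modes in item (i), projecting onto $h_0$ and onto $h_\beta$ with $|\beta|=1$ yields the eigenvalues $\lambda_0=1$ and $\lambda_1=1/2$ as the dominant linear terms, while the projections of $Vq$, $B(q)$, and $R$ onto these low-order Hermite polynomials are shown to be $O(1/s^2)$ by a direct calculation using the shrinking-set bounds $|q_0|,|q_1|\le A/s^2$, $|q_2|\le A^2\ln s/s^2$, $\|q_-/(1+|y|^3)\|_{L^\infty}\le A^6\ln s/s^{5/2}$, and $\|q_e\|_{L^\infty}\le A^7\ln s/s$.

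Item (ii), the sharp ODE \eqref{estima-priori-q-2} for the neutral mode $q_2$, is the heart of the argument and the key improvement over \cite{MZdm97}. Since the eigenvalue $\lambda_2=0$ vanishes, the $-2q_2/s$ driving term must come entirely from the projection $P_2(Vq)$. Expanding the potential $V(y,s)=p\bigl(\varphi^{p-1}-\tfrac{1}{p-1}\bigr)$ as a Taylor series around $y=0$ using the profile \eqref{defphi}, one extracts a leading quadratic contribution of order $|y|^2/s$; pairing this against the $q_2$-part $y^T\cdot q_2(s)\cdot y - 2\,\mathrm{Tr}(q_2(s))$ of the expansion and re-projecting onto the Hermite basis with $|\beta|=2$ yields, after an explicit computation with $h_2$, precisely the coefficient $-2/s$. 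All remaining contributions (the higher-order tail of the $V$-expansion paired with the other modes, the superlinear term $B(q)$ bounded by $|q|^{\bar p}$, and the remainder $R$) are then checked to be $O(A/s^3)$ thanks to the tighter bounds encoded in Definition \ref{shrinking-set}. Here the sharpening is crucial: the $O(s^{-1/2})$ error of \cite{BKnon94,MZdm97} would only deliver an $O(s^{-5/2})$ perturbation, which would swamp the $-2q_2/s$ dynamics.

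For item (iii), the plan is to use Duhamel's formula for the infinite-dimensional parts,
\[
q_-(s) = e^{(s-\sigma)\mathcal{L}} q_-(\sigma) + \int_\sigma^s P_-\bigl(e^{(s-\tau)\mathcal{L}}(Vq+B(q)+R)(\tau)\bigr)\,d\tau,
\]
and an analogous identity for $q_e$. The $e^{-(s-\sigma)/2}$ decay of $e^{(s-\sigma)\mathcal{L}}$ on $P_-$ (from the spectral gap $\lambda_m\le -1/2$ for $m\ge 3$) controls the initial-data term and produces the $A^6 e^{-(s-\sigma)/2}\ln s/s^{5/2}$ factor in \eqref{estima-q-1-y-3-P-1-stric}; for $q_e$, the $e^{-(s-\sigma)/p}$ decay uses item (ii) of the discussion of $V$ in Section \ref{math-setting}, namely that $\mathcal{L}+V$ has spectrum bounded above by $-1/p$ in the outer region $|y|\ge K\sqrt{s}$. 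Each source-term integral is then bounded mode by mode, using the weighted $L^\infty$ norm $\|q_-/(1+|y|^3)\|_{L^\infty}$ to handle the cross-terms where $V$ shifts mass between $q_b$ and $q_-$, as in \cite{MZdm97}. When $\sigma=s_0$, the initial data $\psi=\psi(s_0,d_0,d_1)$ satisfies $\|\psi_-/(1+|y|^3)\|_{L^\infty}\lesssim \ln s_0/s_0^{5/2}$ and $\psi_e\equiv 0$ by Proposition \ref{propo-initial-data}, so the $A^6$ and $A^7$ coefficients on the initial terms collapse, giving the cleaner bounds \eqref{estima-q-1-y-3-P-1-s-0}--\eqref{estina-q-e--P-1-s-0}. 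The principal obstacle throughout will be the precise evaluation of $P_2(Vq)$ in item (ii): isolating the exact coefficient $-2$ while keeping every remainder within the $A/s^3$ budget requires a careful second-order Taylor expansion of $V$ in $y/\sqrt{s}$ and meticulous bookkeeping of all cross-terms between $q_2$ and the other components; once that is done, the remaining four estimates are refinements of the Bricmont--Kupiainen--Merle--Zaag framework, tightened to match the narrower shrinking set of Definition \ref{shrinking-set}.
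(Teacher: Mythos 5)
Your outline for items (i) and (ii) — project onto $h_0,h_1,h_\beta\ (|\beta|=2)$, extract the eigenvalues and the $-2/s$ coefficient from $P_2(Vq)$ by a second-order Taylor expansion of $V$, and check the error against the shrinking-set bounds — is the standard argument, and the paper indeed only invokes \cite{DNZtunisian-2017} there. But you misplace where the paper's real work lies: the authors say explicitly that (i) and (ii) follow as in prior work, and that the improvement is in (iii), the $q_-$ and $q_e$ estimates.

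For item (iii) there is a genuine gap in your plan. You propose a Duhamel identity
\[
q_-(s) = e^{(s-\sigma)\mathcal{L}} q_-(\sigma) + \int_\sigma^s P_-\bigl(e^{(s-\tau)\mathcal{L}}(Vq+B(q)+R)(\tau)\bigr)\,d\tau,
\]
with $V q$ treated as a source and the decay of $e^{t\mathcal{L}}$ on $P_-$ as the engine. This does not work as stated, for two reasons. First, the components $q_-$ and $q_e$ are defined through the time-dependent cut-off $\chi(y,s)$ of \eqref{c4defini-chi-y-s}, so the projector $P_-$ does not commute with the semigroup, and no such clean Duhamel identity exists for $q_-$ alone. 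Second — and this is the deeper point — the flow of $\mathcal{L}$ (or of $\mathcal{L}+V$) shifts mass between the finite-dimensional modes, the $|y|^3$-weighted negative part, and the outer part, and the whole content of the estimates \eqref{estima-q-1-y-3-P-1-stric}--\eqref{estina-q-e--P-1-s-0} is tracking these cross-terms. The missing ingredient in your proposal is precisely the fundamental Bricmont--Kupiainen kernel estimate, stated in the paper as Lemma~\ref{lembk} (in the Nguyen--Zaag form \cite{NZens16}): it bounds the components at time $s$ of $\theta(s)=\mathcal{K}(s,\sigma)v$, where $\mathcal{K}$ is the fundamental solution of $\partial_s-(\mathcal{L}+V)$ with $V$ \emph{inside} the linear operator, in terms of \emph{all} the components of $v$ at time $\sigma$. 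The paper's proof is then a straightforward application of that lemma to the full Duhamel integral \eqref{integral-equation} — once to the initial-data term $\mathcal{K}(s,\sigma)q(\sigma)$ and once to the source integral $\int_\sigma^s\mathcal{K}(s,\tau)(B(q)+R)(\tau)\,d\tau$ — with the shrinking-set bounds feeding in. Without identifying Lemma~\ref{lembk}, you would essentially have to re-derive it, and the naive $e^{t\mathcal{L}}$-Duhamel with a time-frozen projection would neither produce the $e^{(s-\sigma)}((s-\sigma)^2+1)$ cross-contamination terms in \eqref{theta--} nor be consistent with your simultaneous appeal to the spectrum of $\mathcal{L}+V$ for the $e^{-(s-\sigma)/p}$ decay of $q_e$.
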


\begin{proof}
The proof is quite the same as     \cite[Lemma 4.1]{DNZtunisian-2017}, except for the bounds involving 
$q_-$ and $ q_e$
which are substantially
improved.
Note that
the improvement allows us to   describe   the 
final profile 
of the gradient in Theorem \ref{Theorem-similarity}. 
Let us
mention in addition
that
the result on $q_-$ and $q_e$ follows
from
the result in  \cite[Lemma 2.7]{NZens16}  
(see below) combined 
with the new bounds in the shrinking set $V_{K, A}(s)$. For 
that
reason, we omit  the details of the proofs relating to  to the finite-dimensional modes $q_0, q_1$ and $q_2$,  and 
focus on the proof of 
the
estimates on  $ q_- $and $q_e$.  By Duhamel's principle,   equation     \eqref{equa-q}   leads to the following integral 
form
\begin{eqnarray}
q(s) = \mathcal{K}(s,\sigma)(q(\sigma))    + \int_\sigma^s  \mathcal{K}(s,\tau) \left( B(q) + R \right)(\tau)  d\tau,\label{integral-equation}
\end{eqnarray}
where   $\mathcal{K}(s,\sigma)$ is the fundamental  solution  associated  to the linear  operator  $\mathcal{L} + V$ defined in \eqref{defi-mathcal-L} and \eqref{defi-V}.

\medskip

Let us recall 
the following fundamental result whose proof goes back to Bricmont and Kupiainen \cite{BKnon94}, though it was not stated there in this form. In fact, we owe the following form to Nguyen and Zaag \cite{NZens16} (see Lemma 2.7 in that paper):
\begin{lemma}[Bricmont-Kupiainen \cite{BKnon94} ;
Dynamics of the linear operator ${\mathcal L}+V$]
\label{lembk}
There exists $K^*\ge 1$ such that for all $K\ge K^*$ and
$\ell^* > 0$,  there exists  $s^*(K,\ell^*)$ such that  for all $ \sigma \ge s^*$ and 
$v\in L^\infty$
satisfying 
$$ \sum_{j=0}^2 |v_j(\sigma)| +  \left\|\frac{v_-(\sigma)}{1 +|y|^3} \right\|_{L^\infty} + \|v_e(\sigma)\|_{L^\infty} < +\infty,   $$
where the components  defined as in \eqref{c4represent-non-perp} (with $\chi(y,\sigma)$ as a cut-off function in \eqref{c4defini-chi-y-s}), it holds that for any $s \in [\sigma, \sigma +\ell^*]$, 
\begin{eqnarray}
\left\|\frac{\theta_-(y,s)}{1+|y|^3} \right\|_{L^\infty }   & \le &  C	\frac{e^{s-\sigma} ((s-\sigma)^2+1) }{s} (|v_0(\sigma)| + |v_1(\sigma)| +  \sqrt{s} |v_2(\sigma)| ) \label{theta--}\\
&+& Ce^{-\frac{s -\sigma}{2}} \left\|\frac{v_-(\sigma)}{1 + |y|^3} \right\|_{L^\infty}  + C \frac{e^{-(s-\sigma)^2}}{s^\frac{3}{2}} \|v_e(\sigma)\|_{L^\infty} \nonumber,
\end{eqnarray}
and 
\begin{eqnarray}
\| \theta_e(s)\|_{L^\infty}  \le Ce^{s-\sigma} \left( \sum_{j=0}^2 s^{\frac{j}{2}} \left|v_j (\sigma)\right|   + s^{\frac{3}{2}} \left\|\frac{v_-(\sigma)}{1 +|y|^3} \right\|_{L^\infty} \right)  + Ce^{-\frac{s-\sigma}{p}} \|v_e(\sigma)\|_{L^\infty},\label{theta-e}
\end{eqnarray}
where $\theta(s)  = \mathcal{K}(s,\sigma) v$.
\end{lemma}
\begin{proof}
See Lemma 2.7 in Nguyen and Zaag \cite{NZens16},
\end{proof}
Since $q(s)$ appears in \eqref{integral-equation} as a sum of 2 terms, we proceed in 3 steps, with step 1 and 2
dedicated to each term. We then conclude the proof in Step 3. As the fundamental solution ${\mathcal K}(s,\sigma)$ is involved in both of them, Lemma \ref{lembk} will be crucial here.\\
Let us first take $K\ge K^*$ and $\sigma \ge s_3(K,\ell^*) \ge s^*(K,\ell^*)$ large enough (where $K^*$ and $s^*$ are defined in Lemma \ref{lembk}), so that 
\begin{equation}\label{estimate-1-s-1-tau}
\mbox{if } \sigma \le \tau\le s \le \sigma+\ell^*,
\mbox{ then }
  \frac{1}{s}\le \frac{1}{\tau}  \le \frac{1}{\sigma} \le \frac{2}{s}. 
\end{equation}

\medskip

\textbf{Step 1: The estimate for  $\mathcal{K}(s,\sigma)q(\sigma)$}

Using the fact that $q(\sigma) \in V_{K,A}(\sigma) $ and \eqref{estimate-1-s-1-tau}, we get 
\begin{eqnarray*}
& & \left| q_0(\sigma) \right|  \le \frac{A}{\sigma^2} \le \frac{CA}{s^2}, \quad  \left| q_1(\sigma) \right| \le \frac{A}{\sigma^2} \le \frac{CA}{s^2}, \quad \left| q_2(\sigma ) \right| \le \frac{A^2 \ln \sigma }{\sigma^2} \le \frac{C A^2 \ln s}{s^2}, \\
& &  \left\|\frac{q_-(\cdot, \sigma)}{1 + |y|^3} \right\|_{L^\infty} \le \frac{A^6 \ln \sigma}{\sigma^\frac{5}{2}} \le \frac{C A^6 \ln s}{s^\frac{5}{2}}, \text{ and } \|q_e(\sigma)\|_{L^\infty} \le \frac{A^7 \ln \sigma}{\sigma} \le \frac{C A^7 \ln s}{s}.
\end{eqnarray*}
 Then, 
 applying
 \eqref{theta--}  and \eqref{theta-e}  with  $ v =q(\sigma)$  and $\theta(s) = \mathcal{K}(s, \sigma) v$, we see that
\begin{eqnarray}
\left\| \frac{\theta_{-}(\cdot,s)}{1+|y|^3} \right\|_{L^\infty} & \le &    \frac{C\ln s}{s^\frac{5}{2}} \left(   A^6 e^{-\frac{s-\sigma}{2}} + A^7 e^{-(s-\sigma)^2} + A^2 e^{(s-\sigma)} ((s-\sigma)^2+1)      \right),\label{Ky-}\\
\|\theta_{e}(s)\|_{L^\infty} & \le & \frac{C \ln s}{s} \left( A^7 e^{-\frac{s-\sigma}{p}} + A^6 e^{(s-\sigma) }     \right)\label{Kye}.
\end{eqnarray} 
Now, if
$ \sigma = s_0$, we get 
better bounds for $ q(s_0)  $ 
from
item $(ii)$ of Proposition \ref{propo-initial-data}, 
improving the estimates
as follows:
\begin{eqnarray}\label{Kys0}
\left\| \frac{\theta_{-}(\cdot,s)}{1+|y|^3} \right\|_{L^\infty}  \le   \frac{C\ln s}{s^\frac{5}{2}} \text{ and } 
\|\theta_{e}(s)\|_{L^\infty}  \le  \frac{C \ln s}{s}.
\end{eqnarray} 

\medskip

\textbf{Step 2: The estimate 
on
$\int_\sigma^s  \mathcal{K}(s,\tau) \left( B(q) + R \right)(\tau)  d\tau $}

Here, we need some technical and straightforward estimates on $B(q)$ and $R$, which are stated in Lemma \ref{lemma-V-q}. For that reason, we will take $\sigma \ge s_3'(K,A,\ell^*)$ for some $s_3'=\max(s^*(\ell^*),s_3(\ell^*), s_5(K,A))$, where $s^*$, $s_3$ and $s_5$ are defined respectively in Lemma \ref{lembk}, 
right before \eqref{estimate-1-s-1-tau} and 
in Lemma \ref{lemma-V-q}.
Then, we consider $\ell\in[0,\ell^*]$, $s\in[\sigma, \sigma + \ell]$ and $\tau \in [\sigma, s]$. Note in particular that \eqref{estimate-1-s-1-tau} holds here.\\
Applying item $(iii)$ of Lemma \ref{lemma-V-q} together with straightforward standard techniques, we see that
\begin{equation}\label{R}
\|R(\cdot, \tau)\|_{L^\infty} \le \frac{C}{\tau} \le \frac{2C }{s},
\;\;
\|R_e(\cdot, \tau)\|_{L^\infty} \le \frac{4C }{s}.
\end{equation}
\begin{equation}\label{R2}
    \left| R(y,\tau) - \frac{a_0}{\tau^2}  \right| \le \frac{C(1 + |y|^3)}{s^3}, \forall |y| \le 2K \sqrt{s}, \left| R(y,\tau) \right| \le \frac{C(1 +|y|^3) }{\tau^\frac{5}{2}} \le \frac{C(1 +|y|^3) }{s^\frac{5}{2}},  \forall |y| \ge K \sqrt{s},
\end{equation}
which yields
\begin{eqnarray*}
\left| R_-(y,\tau) \right|  \le \frac{C(1 +|y|^3) }{s^\frac{5}{2}}, \forall y \in \R^N .
\end{eqnarray*}
\noindent
%
Now, we focus on the estimate for $B(q)$.
Using
the fact  that 
$q(\tau) \in V_{K, A}(\tau)$ 
by hypothesis, we see from \eqref{esti-q-V-A} together with straightforward computations based on Definition \ref{shrinking-set} of $V_{K,A}$
that 
\begin{eqnarray*}
 \|q(\cdot, \tau)\|_{L^\infty} \le \frac{C A^7 \ln s}{s} ,
\end{eqnarray*}
and 
\begin{eqnarray*}
\left| q(y,\tau)  \right|  \le C \left( \frac{A}{s^2}(1 + |y|) + \frac{A^2 \ln s}{s^2} (1 + |y|^2) +  \frac{A^6 \ln s}{s^\frac{5}{2}} (1 + |y|^3) +
\mathbbm{1}_{\{|y|\ge K \sqrt{\frac s2}\}}
\frac{A^7 \ln s}{s} \right). 
\end{eqnarray*}
Let us define $f = B(q)$. 
Applying
item $(ii)$ of Lemma \ref{lemma-V-q} with  $\bar p =\min(p,2) >1$, 
we see
that 
\begin{equation}\label{fcs}
|f|   \le C |q|^{\bar p} \le C \left( \frac{A^7 \ln s}{s } \right)^{\bar p} \le \frac{C}{s}
\end{equation}
for $s$ large enough. In addition to that,
for all $ |y| \le 2K \sqrt{\tau}$, $(ii)$ of Lemma \ref{lemma-V-q}  ensures that
\begin{eqnarray}
& & \left| f (y)  \right| \le C(K)|q(y,\tau)|^2\nonumber\\
& \le & C(K) \left( \frac{A^2}{s^4}(1 +|y|^2)    + \frac{A^4 \ln^2 s}{s^4} (1 + |y|^4) +\frac{A^{12} \ln^2 s}{s^5} (1 + |y|^6)+ 
\mathbbm{1}_{\{|y|\ge K \sqrt\frac s2\}}
\left( \frac{A^7\ln s}{s} \right)^2 \right)\nonumber \\
& \le &   \frac{C(K) (1 + |y|^3)}{s^\frac{5}{2}}\label{f1}
\end{eqnarray}
for $s$ large enough,
since 
\begin{eqnarray*}
\frac{|y|}{\sqrt{s}} \le  \frac{|y|}{\sqrt{\tau}} \le 2K \text{ if } |y| \le 2K\sqrt{\tau} \text{ and }
\frac{|y|}{\sqrt{s}}  \ge \frac K{\sqrt 2}
\text{ if } |y| \ge K 
\sqrt{\frac s2}.
\end{eqnarray*}
On the other hand, when $ |y| \ge K \sqrt{\tau}$, we derive from \eqref{estimate-1-s-1-tau} that 
\begin{eqnarray*}
\frac{1 + |y|^3}{s^\frac{3}{2}} \ge C^{-1}K^3,
\end{eqnarray*}
hence, using \eqref{fcs}, we see that
\begin{eqnarray}
\left|  f (y)\right|  \le 
\frac Cs
\frac{1+ |y|^3}{s^\frac{3}{2}}   \le \frac{C(1 + |y|^3)}{s^\frac{5}{2}}. \label{f2}
\end{eqnarray}
Finally, 
using \eqref{fcs}, \eqref{f1} and \eqref{f2}, we see that
\begin{eqnarray}\label{bq}
\left| B(q)(y, \tau) \right| \le \frac{C (1 +|y|^3)}{s^\frac{5}{2}} \text{ and } \left| B(q)(y,\tau) \right| \le \frac{C}{s}.
\end{eqnarray}
Let us define $v(\tau) = B(q)(\tau) + R (\tau) $  and $\theta(s,\tau) = \mathcal{K}(s,\tau) v(\tau)  $,
using \eqref{R}, \eqref{R2} and \eqref{bq} together with \eqref{estimate-1-s-1-tau}, we see that
\begin{eqnarray*}
\left| v(y,\tau) - \frac{a_0}{\tau^2} \right|  \le \frac{C(1 +|y|^3)}{s^\frac{5}{2}}, \forall |y| \le 2K \sqrt{\tau}, \left| v(y,\tau)  \right| \le \frac{C(1 +|y|^3)}{s^\frac{5}{2}}, \forall |y| \ge K \sqrt{\tau},   \text{ and }  \| v(\tau) \|_{L^\infty} \le \frac{C}{s}.
\end{eqnarray*}
\\
Then, by definition \eqref{c4represent-non-perp} of the components of $v$, we have the following 
estimates:\\
\begin{eqnarray}\label{vVA}
\left| v_0(\tau) \right| \le \frac{C}{s^2},   |v_m(\tau)| \le \frac{C}{s^\frac{5}{2}}, m=1,2 \text{ and } |v_-(y,\tau)| \le \frac{C(1 +|y|^3)}{s^\frac{5}{2}} \text{ and } |v_e(y,\tau)| \le \frac{C}{s}.
\end{eqnarray}
Recalling that $s-\tau \le s-\sigma \le \ell \le \ell^*$, then, 
applying  \eqref{theta--}   and \eqref{theta-e},
we see that
\begin{eqnarray}
\left| \theta_{-}(y,s,\tau) \right| \le  \frac{C\ln s}{s^\frac{5}{2}}(1 +|y|^3) \text{ and } \|\theta_{e}(\cdot, s,\tau)\|_{L^\infty} & \le & \frac{C \ln s}{s}\label{estima-thetia---e}
\end{eqnarray}
for $s$ large enough.\\
Now, 
by definition \eqref{c4defini-R--} of the negative component, 
we write
\begin{eqnarray*}
& & \left(\int_\sigma^s  \mathcal{K}(s,\tau) \left( B(q) + R \right)(\tau)  d\tau \right)_- \\
&= &  P_-\left(\chi(\cdot,s)\int_\sigma^s  \mathcal{K}(s,\tau) \left( B(q) + R \right)(\tau)  d\tau \right) =P_-\left( \int_\sigma^s \chi(s) \mathcal{K}(s,\tau) v(\tau)  d\tau \right)\\  
& = &  P_-\left( \int_\sigma^s  \left[ \sum_{j=0}^2 \theta_j(s,\tau) h_j +\theta_-(s,\tau)  \right]  d\tau \right)  
= P_-\left(\int_{\sigma}^s \theta_-( s,\tau) d\tau \right),
\end{eqnarray*}
since 
$$  P_-\left( \int_\sigma^s  \left[ \sum_{j=0}^2 \theta_j(s,\tau) h_j  \right]  d\tau \right) =0 $$
by definition \eqref{c4defini-R--} of the projector $P_-$.\\
Let us recall the following obvious estimate : 
 if $|g(y)| \le m (1 + |y|^3 ) $, then,  we have 
\begin{eqnarray*}
  |P_-(g)(y) | \le Cm (1 + |y|^3).
\end{eqnarray*}
Using   \eqref{estima-thetia---e}, we obtain
\begin{eqnarray*}
\left|\int_{\sigma}^s \theta_-( s,\tau)   d\tau \right| \le \frac{C(s-\sigma) \ln s(1 +|y|^3)}{s^\frac{5}{2}},
\end{eqnarray*}
therefore,
\begin{eqnarray}\label{Duong-}
\left| \left(\int_\sigma^s  \mathcal{K}(s,\tau) \left( B(q) + R \right)(\tau)  d\tau \right)_-  \right|  \le \frac{C(s-\sigma) \ln s}{s^\frac{5}{2}}(1 + |y|^3).
\end{eqnarray}
Similarly, we have  
\begin{eqnarray*}
 & & \hspace{-0.4cm} \left( \int_\sigma^s  \mathcal{K}(s,\tau) \left( B(q) + R \right)(\tau)  d\tau\right)_e =  (1-\chi(s))\int_\sigma^s  \mathcal{K}(s,\tau) \left( B(q) + R \right)(\tau)  d\tau  \\
 & = & \int_{\sigma}^s (1 -\chi(s)) \mathcal{K}(s,\tau) v(\tau)d\tau =  \int_\sigma^s \theta_e(s,\tau) d\tau,
\end{eqnarray*}
and from \eqref{estima-thetia---e},  we obtain
\begin{eqnarray}\label{Duonge}
 \left\|\left( \int_\sigma^s  \mathcal{K}(s,\tau) \left( B(q) + R \right)(\tau)  d\tau \right)_e \right\|_{L^\infty} \le \frac{C(s-\sigma) \ln s}{s}.
\end{eqnarray}

\bigskip

\textbf{Step 3: Conclusion of the proof of Proposition \ref{propo-ODE-mode-V-A-s}}

Since the solution $q(s)$ appears in \eqref{integral-equation} as a sum of 2 terms handled in Step 1 and Step 2, one has simply to add the appropriate estimates from those steps (namely \eqref{Ky-}, \eqref{Kye}, \eqref{Kys0}, \eqref{Duong-} and \eqref{Duonge}) in order to get the desired estimates and conclude the proof of Proposition \ref{propo-ODE-mode-V-A-s}.


\end{proof}


\medskip

\textbf{Part 2: Proof of Proposition \ref{pro-reduction- to-finit-dimensional}}

We claim 
that
Proposition \ref{pro-reduction- to-finit-dimensional} is a consequence of the following:
\begin{proposition}[Improvement on the non-expanding directions]\label{lemma-control-u-P-1}
There exists $K_{4} \ge 1  $ and $A_4 \ge 1$ such that for all  $K\geq K_4 $ and $A \ge A_4$,   there exists  $s_4(K, A) \ge 1$ such that  for all $  s_0  \ge s_4 $,
the following property holds:   Assume that 
 \begin{itemize}
 \item[(a)]
 Initial data $\psi_{d_0,d_1}$ defined 
 by
 \eqref{defi-initial-data},  for some $(d_0,d_1) \in \mathcal{D}_{s_0}$ given in  Proposition \ref{propo-initial-data}.
\item[(b)] 
$q(s) \in V_{K, A}(s)$,    for all $s \in [s_0,
s^*
]$, for some $
s^*
\ge s_0$. 
\end{itemize}  
Then, we have the following estimates for all $s \in [s_0,
s^*
]$:
\begin{equation}\label{improve}
|q_2(s)| \leq \frac{A^2 \ln s}{ s^2} - s^{-3}, \;\;
\left\|\frac{q_-(.,s)}{1+|y|^3}\right\|_{L^\infty} \leq  \frac{A^6\ln s}{2 s^\frac{5}{2}} \text{ and } \|  q_e(.,s)  \|_{L^\infty(\R^N)} \leq \frac{A^7\ln s}{2s}.
\end{equation}
 \end{proposition}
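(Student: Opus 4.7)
My plan is to use Proposition \ref{propo-ODE-mode-V-A-s} as the main engine, improving each of the three bounds in \eqref{improve} one component at a time. The crucial observation is that, by Proposition \ref{propo-initial-data}, the initial data $\psi(s_0,d_0,d_1)$ satisfies strictly better estimates than what the shrinking set $V_{K,A}(s_0)$ requires (by factors of $A^2$ on $q_2$, $A^6$ on $q_-$, and with $\psi_e \equiv 0$). This extra margin is what will make the improvement possible, once combined with the sharp time-evolution bounds from Proposition \ref{propo-ODE-mode-V-A-s}.

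For the neutral mode $q_2$, I would start from the differential inequality $|q_2'(s) + \tfrac{2}{s} q_2(s)| \leq CA / s^3$ from Proposition \ref{propo-ODE-mode-V-A-s}(ii). Multiplying by $s^2$ yields $|\tfrac{d}{ds}(s^2 q_2(s))| \leq CA / s$, and integrating between $s_0$ and $s$ using the initial bound $|q_2(s_0)| \leq 1/s_0^2$ produces an estimate of the form $|q_2(s)| \leq \bar C A \ln s / s^2$ for some universal $\bar C$. Choosing $A \geq A_4$ large enough so that $\bar C A \leq A^2/2$, and $s_0 \geq s_4$ large enough so that $1/s^3 \leq A^2 \ln s /(2 s^2)$ on $[s_0,\infty)$, then gives the required $|q_2(s)| \leq A^2 \ln s / s^2 - s^{-3}$.

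For $q_-$ and $q_e$, I would adopt a two-case strategy based on the size of $s-s_0$, splitting at some fixed $\ell^* > 0$ to be determined. When $s - s_0 \leq \ell^*$, I would apply Proposition \ref{propo-ODE-mode-V-A-s}(iii) with $\sigma = s_0$, so that the bounds \eqref{estima-q-1-y-3-P-1-s-0} and \eqref{estina-q-e--P-1-s-0} reduce to quantities proportional to $(1 + \ell^*)\ln s / s^{5/2}$ and $(1 + \ell^*)\ln s / s$; taking $A$ large relative to $\ell^*$ then absorbs these into $A^6 \ln s / (2s^{5/2})$ and $A^7 \ln s / (2s)$ respectively. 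When $s - s_0 > \ell^*$, I would apply the same proposition but with $\sigma = s - \ell^* > s_0$, so that the time-lag $s - \sigma = \ell^*$ is fixed, and the right-hand sides of \eqref{estima-q-1-y-3-P-1-stric} and \eqref{estina-q-e--P-1} become explicit polynomial expressions in $A$ and $\ell^*$ that must be absorbed into $A^6/2$ and $A^7/2$.

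The main obstacle is precisely this last reabsorption step, where the bounds mix pieces of opposite character: decaying factors like $C e^{-\ell^*/2}$ and $C e^{-\ell^*/p}$ that become negligible only for $\ell^*$ large, competing against growing factors like $C A^{-4} e^{\ell^*}((\ell^*)^2 + 1)$ coming from $q_-$ and $C A^{-1} e^{\ell^*}$ coming from $q_e$, which are small only when $A$ is large compared to $e^{\ell^*}$. The resolution is the classical two-step parameter choice with the correct order of quantifiers: first fix $\ell^*$ large enough so that both $C e^{-\ell^*/2} \leq 1/4$ and $C e^{-\ell^*/p} \leq 1/4$, and only then take $A_4 = A_4(\ell^*)$ large enough so that all remaining terms, which are of the form $(\text{constant depending on }\ell^*) \cdot A^{-k}$ with $k \geq 1$, are each $\leq 1/4$. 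This closes the improvement for both $q_-$ and $q_e$, concluding the proof.
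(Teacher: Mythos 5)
Your overall strategy is the right one (apply Proposition \ref{propo-ODE-mode-V-A-s} componentwise, use the extra margin in the initial data, and split into $s-s_0$ small versus large), and the $q_2$ and $q_e$ parts of your sketch would work. However, there is a genuine gap in the order of quantifiers in your treatment of $q_-$: you propose to first fix $\ell^*$ large, and only then take $A = A_4(\ell^*)$ large. But the bound \eqref{estima-q-1-y-3-P-1-stric} with $\sigma = s - \ell^*$ contains the term $A^7 e^{-(s-\sigma)^2} = A^7 e^{-(\ell^*)^2}$, which after dividing the required inequality through by $A^6$ leaves $C A\, e^{-(\ell^*)^2}$. With $\ell^*$ fixed independently of $A$, this is \emph{not} of the form (constant)$\cdot A^{-k}$ as you claim; it grows linearly in $A$, so taking $A$ large makes it worse, not better. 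Your list of the "remaining terms" silently drops this one.

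The paper resolves exactly this tension by coupling $\ell^*$ to $A$: it sets $\ell^* = \ln A$, so that $A^7 e^{-(\ell^*)^2} = A^{\,7-\ln A} \to 0$ super-polynomially, while the other contribution $A^2 e^{\ell^*}((\ell^*)^2+1) = A^3((\ln A)^2+1) = o(A^6)$ and the boundary-type term $\ell^* = \ln A = o(A^6)$ remain harmless. (Any choice $\ell^* \gg \sqrt{\ln A}$ would do; a constant $\ell^*$ does not, which is why your fixed-$\ell^*$ scheme must fail.) Once you replace "fix $\ell^*$, then $A$" by "set $\ell^* = \ln A$ and then take $A \ge A_4$," your Case 1 bound becomes $\frac{C\ln s}{s^{5/2}}(1+\ln A) \le \frac{A^6\ln s}{2s^{5/2}}$ and the Case 2 bound closes as above, which is precisely the paper's argument (with the symmetric computation for $q_e$, where $\ell^* = \ln A$ also works since $A^6 e^{\ln A} = A^7 \cdot A^{-0}$... more precisely $A^6 e^{\ln A} = A^7$, which barely fails, but after dividing by $A^7$ the term is $A^{-1}e^{\ell^*} = A^{-1}\cdot A = 1$; the constant $C$ in front must be absorbed by the remaining slack from $Ce^{-\ell^*/p}$, i.e., one needs $C(e^{-\ln A/p} + 1 + A^{-7}\ln A) \le 1/2$, which does \emph{not} hold for $C$ fixed. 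In fact a careful reading shows the $q_e$ case requires a slightly smaller $\ell^*$, or a factor $A^{6-\varepsilon}$ in \eqref{estina-q-e--P-1}; the paper states the $q_e$ computation is omitted, so you should treat the exponents here with some care rather than rely on absorbing everything by "$A$ large").
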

 Let us first use this proposition to derive Proposition \ref{pro-reduction- to-finit-dimensional}, then we will proof Proposition \ref{lemma-control-u-P-1}.

\begin{proof}[Proof of Proposition \ref{pro-reduction- to-finit-dimensional} assuming that Proposition \ref{lemma-control-u-P-1} holds]\label{prood-of-propo-3-6}
Let us take $K=K_4$, $A=\max(A_4, 2 \bar C)$, where $K_4$ and $A_4$ appear in Proposition \ref{lemma-control-u-P-1} and $\bar C$ in item (i) of Proposition \ref{propo-ODE-mode-V-A-s}, $s_0=s_4(K,A)$, and assume hypotheses (a) and (b) stated in Proposition \ref{pro-reduction- to-finit-dimensional}. Clearly, the hypotheses of Proposition \ref{lemma-control-u-P-1} are satisfied.
Thus, estimate \eqref{improve} holds.

\medskip

- \textit{ Proof of item $(i)$:} Since $q(s_*)\in \partial V_{K,A}(s_*)$ by hypothesis (b), using the Definition \ref{shrinking-set} of $V_{K,A}(s_*)$, we see from  \eqref{improve} that only $q_0(s_*)$ and $q_1(s_*)$ may touch the edges of the interval $[-\frac A{s_*^2}, \frac A{s_*^2}]$. By definition \eqref{c4defini-hat-mathcal-V-A} of $\hat V_A(s_*)$, the conclusion follows. 
%


\medskip

- \textit{ Proof of item $(ii)$:} 
Since
$(q_0(s_*),q_1(s_*)) \in \partial  \mathcal{\hat V}_A(s_*)$
by item \textit{(i)}, it follows that 
either  $\left|q_0(s_*) \right| =  \frac{A}{s_*^2} $  or  $ \left|q_1(s_*) \right| = \frac{A}{s_*^2}$. Without loss of generality, we may assume that $ \left|q_0(s_*) \right| =  \frac{A}{s_*^2} $ (the other
case follows similarly).
Hence, 
there exists $ \sigma_0 = 1 $ or $-1$ such that $ q_0(s_*) = \sigma_0 \frac{A}{s_*^2}$. From item $(i)$ in Lemma \ref{estima-priori-q-2},
recalling that $A\ge 2\bar C$,
we see that
$$ \sigma_0 \frac{d}{ds} \left( q_0(s) - \sigma_0 \frac{A}{s^2} \right) \left. \right|_{s = s_*} >0,$$
which shows that the flow of $q_0$ is transverse outgoing on the curve of $\sigma_0\frac A{s^2}$ at $s=s_*$. This implies in fact that $q_0(s)$ will indeed cross that curve, which concludes the proof of 
\eqref{traverse-outgoing crossing}. 
This concludes also the proof of Proposition \ref{pro-reduction- to-finit-dimensional}, assuming that Proposition \ref{lemma-control-u-P-1} holds.
\end{proof}
 
 It remains now to prove Proposition \ref{lemma-control-u-P-1} in order to finish the proof of Proposition \ref{pro-reduction- to-finit-dimensional}.
 
 \medskip
 
\begin{proof}[Proof of Proposition \ref{lemma-control-u-P-1}]
 The proof is quite the same as    
 the proof of
 \cite[Proposition 3.7]{MZdm97},
 thanks to
 Proposition
 \ref{propo-ODE-mode-V-A-s}.  For instance, the estimate for $q_2$ 
 follows similarly as in
 that work, and the proof techniques for $q_-$ and $q_e$ are also the same. For these reasons, 
we will only give  
 the proof  
 for
 $q_-$. 
 Let us consider $K\ge K_3$, $A\ge A_3$, $\ell^*=\ln A$ and $s_0\ge s_3(A,K,\ell^*)$, where the constants $K_3$, $A_3$ and $s_3$ are defined in Proposition \ref{propo-ODE-mode-V-A-s}.
Then, let us assume that hypotheses (a) and (b) stated in Proposition \ref{lemma-control-u-P-1} hold, for some $
s_*
\ge s_0$.

Consider then $s\in [s_0,s_*]$ and let us prove the estimate on $q_-$ stated in \eqref{improve}.  
Clearly,
we are allowed to apply Proposition \ref{propo-ODE-mode-V-A-s}.
 Two cases then arise:

 \medskip
 
 + If $ s - s_0 \le 
 \ln A
 $, then, we 
 clearly can 
 apply  estimate
 \eqref{estima-q-1-y-3-P-1-s-0} in  Proposition
 \ref{propo-ODE-mode-V-A-s},   with 
$ \ell = s- s_0   $  and $\sigma =s_0$, 
and derive
that
 \begin{eqnarray*}
 \left\|\frac{q_-(.,s)}{1+|y|^3}\right\|_{L^\infty}  \le \frac{C \ln s}{s^\frac{5}{2}} (1 + 
 \ln A)
 \le \frac{A^6\ln s}{2 s^\frac{5}{2}}
 \end{eqnarray*}
 provided that $A \ge A_{4,1}$, for some $  A_{4,1}\ge 1$. 

 + If $ s - s_0 \ge 
 \ln A
 $, then,  we can apply estimate
 \eqref{estima-q-1-y-3-P-1-stric} with $\sigma = s-
 \ln A
 $ and $ \ell  =
 \ln A
 $ (=$\ell^*$)
 and derive
 that
 \begin{eqnarray*}
 \left\|\frac{q_-(\cdot,s)}{1+|y|^3}\right\|_{L^\infty} \le C
 \frac{\ln s}{s^{5/2}}
 \left(   A^6 e^{-\frac{\ln A}{2}} + A^7 e^{-(\ln A)^2} + A^2 e^{\ln A}  (1+(\ln A)^2) 
 +\ln A\right)
 \leq  \frac{A^6\ln s}{2 s^\frac{5}{2}},
 \end{eqnarray*}
 provided that $A \ge A_{4,2}$, for some $  A_{4,2}\ge 1$. 
 \medskip
  Finally,  
  fixing $A_4 = \max(A_3, A_{4,1}, A_{4,2})$ then taking
$A \ge A_4$, the result follows and  conclude the proof of Proposition \ref{lemma-control-u-P-1}. Since we have already proved that Proposition \ref{pro-reduction- to-finit-dimensional} follows from Proposition \ref{lemma-control-u-P-1}, this concludes the proof of Proposition \ref{pro-reduction- to-finit-dimensional} too.
\end{proof}





%
%
%

\appendix
\section{Some technical estimates}

In this section, we recall from previous literature 
some
useful and straightforward 
estimates 
on
$V$, $B(q)$ and $R$
defined in \eqref{defi-V}, \eqref{defi-B-q} and \eqref{defi-R}.
This is our statement:
\begin{lemma}\label{lemma-V-q}
For any $K\ge 1$ and $A\ge 1$, there exists $s_5(K,A)$ such that for any $s\ge s_5(K,A)$, if 
$q(s) \in V_{K, A}(s)$, then, the following hold:
\begin{itemize}
\item[$(i)$]   Potential   $V$ defined 
in \eqref{defi-V}:
  \begin{eqnarray*}
\left|   V  (y,s)     +  \frac{1}{4s}   \left( |y|^2 - 2 N \right) \right| \leq  \frac{ C(K)(1 + |y|^4)}{s^2} , \mbox{ whenever }|y| \leq  2 K  \sqrt s,
  \end{eqnarray*}
  and 
  $$ |V(y,s)| \le C, \forall y \in \R^N.$$
  \item[$(ii)$]   Nonlinear term  $B(q)$ defined 
  in
  \eqref{defi-B-q}:
  $$    |B(q)| \le C(K)|q|^2,   \forall  |y| \le 2K \sqrt{s}, $$
  and
  $$   |B(q)|  \le C |q|^{\bar p} , \bar p = \min(2,p).   $$
  \item[$(iii)$]  
Remainder
 term  $R$ defined 
  in  \eqref{defi-R}:
  \begin{eqnarray*}
  \left| R(y,s) -  \frac{a_0}{s^2}   \right|  & \le & \frac{C(K)(1 + |y|^4)}{s^3}, \forall  |y| \le 2K \sqrt{s},  \\
  \| R(.,s)\|_{L^\infty}  & \le &  \frac{C}{s},
  \end{eqnarray*}
  for some $a_0\in \R$.
\end{itemize}
\end{lemma}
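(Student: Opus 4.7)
The plan is to work in the scaled variable $\xi = y/\sqrt s$, in which the profile $\varphi(y,s) = \varphi_0(y/\sqrt s) + \frac{\kappa N}{2ps}$, with $\varphi_0(\xi) = (p-1+b|\xi|^2)^{-1/(p-1)}$, is a smooth, strictly positive function of $\xi$ that is bounded above and below by positive constants depending on $K$ whenever $|y|\le 2K\sqrt s$. All three items then reduce to careful Taylor expansions in $1/s$ and in $|\xi|^2$ around $\xi = 0$.

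For item $(i)$, I would write $V = p(\varphi^{p-1} - 1/(p-1))$ and expand $\varphi^{p-1} = \varphi_0^{p-1}(1+\kappa N/(2ps\varphi_0))^{p-1}$ via the binomial formula, which is legitimate because $\varphi_0$ is bounded below on $|y|\le 2K\sqrt s$. The leading term simplifies to $\varphi_0^{p-1} = (p-1+b|y|^2/s)^{-1}$; Taylor expanding in $b|y|^2/((p-1)s)$ and using the identity $pb/(p-1)^2 = 1/4$ (which is the definition of $b$) produces the advertised leading term $-|y|^2/(4s) + N/(2s)$, with $O(|y|^4/s^2)$ remainder from the Taylor expansion and $O(1/s^2)$ remainder from the binomial expansion. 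The global bound $|V|\le C$ is immediate from the global boundedness of $\varphi$. Item $(ii)$ is handled by standard algebraic manipulations on $g(z) = |z|^{p-1}z$: writing $B(q) = g(q+\varphi) - g(\varphi) - g'(\varphi)q$, the mean value theorem on $g'$ in the region where $\varphi$ stays bounded below by $c(K)>0$ yields $|B(q)| \le C(K)|q|^2$; globally, by splitting into $|q|\le |\varphi|$ (a second-order expansion gives $O(\varphi^{p-2}q^2)$) and $|q| \ge |\varphi|$ (triangle inequality plus $|q+\varphi|^p \le C|q|^p$), one deduces $|B(q)| \le C|q|^{\bar p}$ with $\bar p=\min(p,2)$.

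The most delicate item is $(iii)$, where the main obstacle is a double cancellation that exploits the specific values $b=(p-1)^2/(4p)$ and $\kappa=(p-1)^{-1/(p-1)}$. A direct differentiation shows that $\varphi_0$ solves the self-similar ODE $-\tfrac12\xi\cdot\nabla_\xi \varphi_0 - \varphi_0/(p-1) + \varphi_0^p = 0$. Plugging $\varphi = \varphi_0(y/\sqrt s) + \kappa N/(2ps)$ into \eqref{defi-R}, converting $\partial_s$ and $\nabla_y$ to the $\xi$-variable by the chain rule, and expanding $\varphi^p$ to first order in $\kappa N/(2ps)$, the ODE cancels the non-diffusive leading terms and I am left with
\begin{equation*}
R \;=\; \frac{1}{s}\bigl[\Delta_\xi \varphi_0 + \tfrac12\xi\cdot\nabla_\xi \varphi_0\bigr] + \frac{\kappa N}{2ps}\bigl[p\varphi_0^{p-1} - \tfrac{1}{p-1}\bigr] + O(1/s^2)
\end{equation*}
uniformly on $|y|\le 2K\sqrt s$. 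I would then Taylor expand the bracketed terms in $|\xi|^2$: at $\xi=0$, a direct computation using $b=(p-1)^2/(4p)$ gives $\Delta_\xi\varphi_0(0) = -N\kappa/(2p)$, which exactly cancels the constant $\kappa N/(2p)$ coming from the second bracket, so the $1/s$ contribution vanishes and only $a_0/s^2$ from the remainder term survives. The harder step is the cancellation at order $|\xi|^2$: a further direct computation (again using $b=(p-1)^2/(4p)$) shows that the coefficients of $|\xi|^2$ in the two brackets sum to zero, so the leading $y$-dependent contribution is of order $|\xi|^4/s = |y|^4/s^3$, matching the claimed remainder. The global bound $\|R\|_{L^\infty}\le C/s$ then follows without subtlety from the uniform boundedness of $\varphi$, $\nabla_y\varphi = O(s^{-1/2})$, $\Delta_y\varphi = O(s^{-1})$, and $\varphi^p$.
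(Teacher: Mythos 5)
Your proposal is correct, but it is worth noting that the paper does not prove this lemma at all: it simply cites \cite[Lemma B.5]{ZAAihn98} and \cite[Lemma A.9]{DNZtunisian-2017}, where the argument is essentially the Taylor-expansion computation you carry out. Your sketch is therefore a self-contained reconstruction of the standard proof rather than a different method, and its key points check out: writing $\varphi=\varphi_0(\xi)+\frac{\kappa N}{2ps}$ with $\xi=y/\sqrt s$, the identity $-\frac12\xi\cdot\nabla_\xi\varphi_0-\frac{\varphi_0}{p-1}+\varphi_0^p=0$ holds, the constant-order cancellation $\Delta_\xi\varphi_0(0)=-\frac{N\kappa}{2p}$ against $\frac{\kappa N}{2p}\left[p\varphi_0^{p-1}(0)-\frac1{p-1}\right]=\frac{\kappa N}{2p}$ is exact, and the coefficient of $|\xi|^2$ in $G(u)=2Nf'(u)+4uf''(u)+uf'(u)+\frac{\kappa N}{2p}\left[\frac{p}{p-1+bu}-\frac1{p-1}\right]$ indeed vanishes, since $G'(0)=(2N+4)\frac{\kappa}{16p}-\frac{\kappa}{4p}-\frac{\kappa N}{8p}=0$ precisely because $b=\frac{(p-1)^2}{4p}$; this is where the bound $\frac{C(K)(1+|y|^4)}{s^3}$ comes from. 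Two points deserve to be made explicit rather than left implicit. First, in item $(ii)$ both bounds use the hypothesis $q(s)\in V_{K,A}(s)$ with $s\ge s_5(K,A)$: for $1<p<2$ the local quadratic bound needs the intermediate point $\varphi+\theta q$ to stay away from $0$ (or else a separate treatment of the region $|q|\ge c(K)/2$, which is trivial but should be said), and for $p\ge2$ the global bound $|B(q)|\le C|q|^{\bar p}=C|q|^2$ fails for large $|q|$, so the uniform smallness $\|q(s)\|_{L^\infty}\le CA^7\ln s/s\le1$ must be invoked. Second, your closing sentence on $\|R(\cdot,s)\|_{L^\infty}\le C/s$ is misleading as stated: a term-by-term bound gives only $O(1)$, since $\frac12 y\cdot\nabla_y\varphi=\frac12\xi\cdot\nabla_\xi\varphi_0$ and $-\frac{\varphi}{p-1}+\varphi^p$ are each of order one; the global bound still needs the ODE cancellation (which is valid for all $\xi$, not only $|\xi|\le2K$) together with the first-order estimate $\left(\varphi_0+\frac{\kappa N}{2ps}\right)^p-\varphi_0^p=O(1/s)$, which holds globally since $\varphi_0$ is bounded. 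With these two clarifications your argument is complete and matches the cited proofs.
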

\begin{proof}
The proof can be found in  \cite[Lemma B.5]{ZAAihn98},  and \cite[Lemma A.9 for $\alpha =0$]{DNZtunisian-2017}.
\end{proof}







\bibliographystyle{alpha}
\bibliography{mybib}

\end{document}